\DeclareSymbolFontAlphabet{\mathbb}{AMSb}
\DeclareSymbolFontAlphabet{\mathbbl}{bbold}
\newlength{\oldparindent}
\NewDocumentCommand{\ffff}{oo}{{
\mathscr{F}\IfValueT{#1}{_{{#1}}}\IfValueT{#2}{\left(
	{#2}
	\right)}\IfValueF{#2}{\left(X,d\right)}
}}
\newcommand{\superimpose}[2]{%
	{\ooalign{$#1\@firstoftwo#2$\cr\hfil$#1\@secondoftwo#2$\hfil\cr}}}
\newcommand{\rr}{{\mathbb{R}}}
\newcommand{\rrflex}[1]{{\ensuremath{\rr^{#1}
}}}
\newcommand{\rrd}{{\rrflex{d}}}
\newcommand{\cc}{{\mathbb{C}}}
\newcommand{\zz}{{\mathbb{Z}}}
\newcommand{\thetareg}[1]{{\ensuremath{
			\theta_t^{\mathfrak{ext}}
}}}
\newcommand{\thetarreg}[1]{{\ensuremath{
			\thetar{t}%
}}}
\newcommand{\thetar}[1]{{
		\ensuremath{
			\theta_{#1}^{\mathfrak{ext}}
		}
}}
\newcommand*\bigcdot{\mathpalette\bigcdot@{.5}}
\newcommand*\bigcdot@[2]{\mathbin{\vcenter{\hbox{\scalebox{#2}{$\m@th#1\bullet$}}}}}
\def\@chapter[#1]#2{\ifnum \c@secnumdepth >\m@ne
	\refstepcounter{chapter}%
	\typeout{\@chapapp\space\thechapter.}%
	\addcontentsline{toc}{chapter}%
	{\protect\numberline{\thechapter}\string\hypertarget{chap\thechapter}{#1}}%
	\else
	\addcontentsline{toc}{chapter}{#1}%
	\fi
	\chaptermark{#1}%
	\addtocontents{lof}{\protect\addvspace{10\p@}}%
	\addtocontents{lot}{\protect\addvspace{10\p@}}%
	\if@twocolumn
	\@topnewpage[\@makechapterhead{#2}]%
	\else
	\@makechapterhead{#2}%
	\@afterheading
	\fi}
\def\@makechapterhead#1{%
	\vspace*{50\p@}%
	{\parindent \z@ \raggedright \normalfont
		\ifnum \c@secnumdepth >\m@ne
		\huge\bfseries \@chapapp\space \thechapter
		\par\nobreak
		\vskip 20\p@
		\fi
		\interlinepenalty\@M
		\Huge \bfseries \hyperlink{chap\thechapter}{#1}\par\nobreak
		\vskip 40\p@
}}
\NewDocumentCommand\SpageGen{oo}{
	{
		\left[
		\IfValueT{#1}{{#1}}\IfValueF{#1}{\xx}
		,
		\IfValueT{#2}{{#2}}\IfValueF{#2}{f}
		\right]
	}
}
\newcommand{\xx}{\bbxi}
\newcommand{\xireg}[1]{{\ensuremath{
			\xi_t^{\mathfrak{r}}
}}}
\newcommand{\xirreg}[1]{{\ensuremath{
			\xir{t}%
}}}
\newcommand{\xir}[1]{{
		\ensuremath{
			\xi_{#1}^{\mathfrak{r}}
		}
}}
\NewDocumentCommand\llangle{mo}{
	\left\langle\IfValueT{#1}{{#1}}
	\IfValueT{#2}{
		,{#2}
	}
	\right\rangle
	\IfValueF{#2}{^+}
}
\NewDocumentCommand\lrangle{m}{
	\left\langle\IfValueT{#1}{{#1}}
	\right\rangle
}
\NewDocumentCommand\NN{oo}{
	{\mathcal{NN}%
		_{\IfValueT{#2}{,#2}}	
		\IfValueF{#1}{^{\ffff}}\IfValueT{#1}{^{#1}}
	}
}
\NewDocumentCommand\NNshal{oo}{
	{nn%
		_{\IfValueT{#1}{#1}}	
		\IfValueF{#2}{^{\ffff}}\IfValueT{#2}{^{#2}}
	}
}
\NewDocumentCommand\NNho{oo}{
	{\mathcal{HNN}%
		_{R\IfValueT{#1}{#1}}	
		\IfValueF{#2}{^{\ffff}}\IfValueT{#2}{^{#2}}
	}
}
\NewDocumentCommand\NNaff{oo}{
	{\mathcal{NN}%
		_{R,\IfValueT{#1}{#1}}	
		\IfValueF{#2}{^{a}}
	}
}
\newcommand{\gggg}{{\mathscr{G}}}	
\NewDocumentCommand\Lpc{o}{
	\Lambda^p_{\mu}\left(\gggg;
	\IfValueT{#1}{{#1}}\IfValueF{#1}{X}
	\right)}
\NewDocumentCommand\LpcK{o}{\Lambda^p_{\mu}\left(K;X\right)}
\NewDocumentCommand\Bor{o}{{
		\mathscr{B}\left(
		\IfValueF{#1}{X}
		\IfValueT{#1}{{#1}}
		\right)
}}
\NewDocumentCommand\grp{oo}{
	{
		\mathscr{G}^{(
			\IfValueT{#1}{{#1}}\IfValueF{#1}{{p}}
			)}\left(
		\IfValueF{#2}{\rrd}\IfValueT{#2}{\rrflex{#2}}
		\right)
	}
}
\newtheoremstyle{dotless}{}{}{\itshape}{}{\bfseries}{}{ }{}
\theoremstyle{dotless}
\numberwithin{equation}{section}
\theoremstyle{plain}
\theoremstyle{definition}
\newtheorem{defn}{Definition}[section]
\theoremstyle{definition}
\theoremstyle{plain}
\theoremstyle{definition}
\newtheorem{prop}[defn]{Proposition}%
\newtheorem{cor}[defn]{Corollary}%
\newtheorem{lem}[defn]{Lemma}%
\newtheorem{ex}[defn]{Example}%
\newtheorem{thrm}[defn]{Theorem}%
\newtheorem{rremark}[defn]{Remark}%
\newcommand{\nf}{ \Omega^n(A/k) }
\newcommand{\ncof}{ \Omega^1(A/k) }
\newcommand{\kiki}{{k_{i^{-1}[\ma]}}}
\newcommand{\ma}{ {\mathfrak{m}} }
\newcommand{\ama}{ A_{\ma} }
\newcommand{\adj}{\rotatebox[origin=c]{180}{\ensuremath\vdash}}
\newcommand{\rh}{ \mathscr{E}_{A^e}^k }
\newcommand{\ezz}{\mathscr{E}_{\zz}^{\zz}}
\newcommand{\ea}{\mathscr{E}_A^k}
\newcommand{\eal}{\mathscr{E}_{\mathfrak{m},k}}
\newcommand{\eall}{\mathscr{E}_{A_{\ma}}^{k_{i^{-1}[\ma]}}}
\newcommand{\ek}{\mathscr{E}^k}
\newcommand{\abr}{\hat{CB}_{\star}(A)}
\newcommand{\an}{A^{\otimes n}}
\newcommand{\antw}{A^{\otimes n+2}}
\begin{document}
\author{Anastasis Kratsios
\thanks{Department of Mathematics, ETH Z\"{u}rich, HG G 32.3, R\"{a}mistrasse 101, 8092 Z\"{u}rich.  email: anastasis.kratsios@math.ethz.ch}
}
\title{\Large \bf A Lower-Bound on the Hochschild Cohomological Dimension} %
\lhead{A. Kratsios
}
\chead{\small }
\rhead{\small \today}%

\date{%
	\today %
}
	
	\maketitle

	\begin{abstract}
A concrete lower-bound for the Hochschild cohomological dimension of a commutative $k$-algebra, in terms of three other homological invariants is obtained.  This result is then used to show that most $k$-algebras fail to be quasi-free, even if they are smooth.  This result generalizes a result of \cite{cuntz1995algebra} to the case where the base-ring is no longer $\cc$ but can be any commutative ring with unity.  
	\end{abstract}

	\noindent
	{\itshape Keywords: Non-commutative algebraic geometry, Homological Algebra, Hochschild cohomology, Quasi-Free Algebras, Smoothness, Dimension Theory.  }  
	
	\noindent
	\let\thefootnote\relax\footnotetext{This research was supported by the ETH Z\"{u}rich Foundation.}
	
	\noindent
	{\bf Mathematics Subject Classification (2010): 16E40, 13D03, 16E10, 16E65, 14A22. }  
	
	
	\section*{Introduction}
    Non-commutative algebraic geometry is a rapidly developing area of contemporary mathematical research which studies spaces which are dual to certain non-commutative algebras.  Many natural extensions and analogues of classical commutative algebraic geometry notions have studied since the field's inception, from differential forms in the celebrated results of \cite{HKR}, to the smoothness of a non-commutative space as studied by \cite{krahmer2006poincare}.  

This paper will focus on the later of these topics.  This paper examines how the quasi-freeness property of \cite{cuntz1995algebra}, which is a standard extension of the formal smoothness introduced in \cite{grothendieck2002rev}, differs from classical formal smoothness when the underlying algebra corresponds to a smooth affine algebraic variety (in the scheme-theoretic sense) over a commutative ring $k$.  Specifically, using the central tool of non-commutative algebraic geometry, the Hochschild cohomology, a homological dimension theoretic argument, is used to show that most smooth affine algebras fail to be smooth in the non-commutative sense; that is they fail to be quasi-free.  Examples over $\zz$ are also explored.

This paper is organized as follows.  Section $1$ briefly overviews some of the relevant language and tools from relative homological algebra, homological dimension theory, and non-commutative algebraic geometry.  Section $2$ then makes use of these tools to establish a lower bound on the Hochschild cohomological dimension of a commutative algebra.  The result is then used to examine the disparity between quasi-freeness and classical formal smoothness of the algebra of function corresponding to a smooth affine algebraic variety.  The appendix contains a small number of auxiliary results, whose proof would otherwise detract from the central flow of this article.  
\section{Background}
The results in this paper are formulated using the \textit{relative homological algebra} introduced in \cite{beck1967triples}.  The theory is analogous to classical homological algebra; see \cite{rotman1979introduction}, for example, but in this case, one builds the entire theory relative to a suitable subclass of epi(resp. mono)-morphisms.  In our case, these are defined as follows.  

\begin{defn}[$\mathscr{E}_A^k$-Epimorphism]\label{defn_rel_epi}
	For any $k$-algebra $A$, an epimorphism $\epsilon$ in $_{A}Mod$ is an {$\mathscr{E}_A^k$-epimorphism} \textit{if and only if} $\epsilon$'s underlying morphism of $k$-modules is a $k$-split epimorphism in $_kMod$.  
	The class of these epimorphisms is denoted $\mathscr{E}_A^k$. 
\end{defn}
%
\begin{defn}[$\ea$-Exact sequence]\label{defn_rel_exact}
	An exact sequence of $A$-modules: 
	\begin{equation}
	\begin{tikzpicture}[>=angle 90]
	\matrix(a)[matrix of math nodes,
	row sep=3em, column sep=1.5em,
	text height=1.5ex, text depth=0.25ex]
	{..& M_i & M_{i+1} & M_{i+2} &.. \\};
	\path[->,font=\scriptsize]
	(a-1-1) edge node[above]{$ \phi_{i-1} $} (a-1-2);
	\path[->,font=\scriptsize]
	(a-1-2) edge node[above]{$ \phi_{i} $}(a-1-3);
	\path[->,font=\scriptsize]
	(a-1-3) edge node[above]{$ \phi_{i+1} $}(a-1-4);
	\path[->,font=\scriptsize]
	(a-1-4) edge node[above]{$ \phi_{i+2} $}(a-1-5);
	\end{tikzpicture}
	\label{exactseqAmod1}
	\end{equation}
	is said to be \textbf{$\ea$-exact} if and only if for every integer $i$ the there exists a morphism of $k$-modules $\psi_i: M_{i+1}\rightarrow M_i $ such that: 
	\begin{equation}
	\phi_i = \phi_i \circ \psi_i \circ \phi_i .
	\label{reladmissibility}
	\footnote{Property~\eqref{reladmissibility} is called $\rh$-admissibility \cite{hilton1971course} (alternatively it is called $\rh$-allowable \cite{mac1995c}).  }
	\end{equation}
	In particular, a short exact sequence of $A$-modules which is $\ea$-exact is called an {$\ea$-short exact sequence}.  
\end{defn}
\begin{ex}\label{exadmis}
	The augmented bar complex $\abr$ of a $k$-algebra $A$ is $\rh$-exact.  
\end{ex}
%
%
\begin{defn}[$\ea$-Projective module]
	If $A$ is a $k$-algebra and $P$ is an $A$-module, then $P$ is said to be {$\ea$-projective}    \footnote{This definition is equivalent to requiring that $P$ verify the universal property of projective modules \textit{only} on $\ea$-epimorphisms \cite{mac1995c}.  } \textit{if and only if} for every $\ea$-short exact sequence: \begin{equation}
	\begin{tikzpicture}[>=angle 90]
	\matrix(a)[matrix of math nodes,
	row sep=3em, column sep=1.5em,
	text height=1.5ex, text depth=0.25ex]
	{0 & M & N & N' & 0 \\};
	\path[->,font=\scriptsize]
	(a-1-1) edge node[above]{$  $} (a-1-2);
	\path[->,font=\scriptsize]
	(a-1-2) edge node[above]{$ \eta $}(a-1-3);
	\path[->,font=\scriptsize]
	(a-1-3) edge node[above]{$ \epsilon $}(a-1-4);
	\path[->,font=\scriptsize]
	(a-1-4) edge node[above]{$  $}(a-1-5);
	\end{tikzpicture}
	\label{exactseqAmod2}
	\end{equation}
	the sequence of $k$-modules: 
	\begin{equation}
	\begin{tikzpicture}[>=angle 90]
	\matrix(a)[matrix of math nodes,
	row sep=3em, column sep=1.5em,
	text height=1.5ex, text depth=0.25ex]
	{0 & Hom_A(P,M) & Hom_A(P,N) & Hom_A(P,N') & 0 \\};
	\path[->,font=\scriptsize]
	(a-1-1) edge node[above]{$  $} (a-1-2);
	\path[->,font=\scriptsize]
	(a-1-2) edge node[above]{$ \eta^{\star} $}(a-1-3);
	\path[->,font=\scriptsize]
	(a-1-3) edge node[above]{$ \epsilon^{\star} $}(a-1-4);
	\path[->,font=\scriptsize]
	(a-1-4) edge node[above]{$  $}(a-1-5);
	\end{tikzpicture}
	\label{exactseqAmod3}
	\end{equation}
	is exact.  
\end{defn}
\begin{ex}\label{propan}
	$A^{\otimes n+2}$ is $\rh$-projective for all $n\in \mathbb{N}$.
\end{ex}
$\ea$-projective $A$-modules have analogous properties to projective $A$-modules.  For example, $\ea$-projective $A$-modules admit the following characterization.  
\begin{prop}$\label{eaprojchar}$
	For any $A$-module $P$ the following are equivalent: 
	\begin{enumerate}[(i)]
		\item \textbf{$\ea$-Short exact sequence preservation property} $P$ is $\ea$-projective.  
		\item \textbf{$\ea$-lifting property} For every $\ea$-epimorphism $f: N \rightarrow M$ if there exists an $A$-module morphism $g: P \rightarrow M$ then \textit{there exists} an $A$-module map $\tilde{f}: P \rightarrow N$ such that $f \circ \tilde{f} = g$.  
		\item \textbf{$\ea$-splitting property} Every short $\ea$-exact sequence of the form: \begin{equation}
		\begin{tikzpicture}[>=angle 90]
		\matrix(a)[matrix of math nodes,
		row sep=3em, column sep=2.5em,
		text height=1.5ex, text depth=0.25ex]
		{\mathfrak{E}_{\pi}: 0 & M & N & P & 0 \\};
		\path[->,font=\scriptsize]
		(a-1-1) edge node[above]{$  $} (a-1-2);
		\path[->,font=\scriptsize]
		(a-1-2) edge node[above]{$  $} (a-1-3);
		\path[->,font=\scriptsize]
		(a-1-3) edge node[above]{$  $} (a-1-4);
		\path[->,font=\scriptsize]
		(a-1-4) edge node[above]{$  $} (a-1-5);
		\end{tikzpicture} 
		\label{4b4b433}
		\end{equation}
		is $A$-split-exact.  
		\item \textbf{$\ea$-free direct summand property}
		\footnote{If $F$ is a free $k$-module, some authors call $A \otimes_k F$ an $\ea$-free module.  
			In fact this gives an alternative proof that $A^e\otimes_k \an \cong \antw$ is $\rh$-free for every $n \in \mathbb{N}$.  
		}
		There exists a $k$-module $F$, an $A$-module $Q$ and an isomorphism of $A$-modules $\phi: P\oplus Q \overset{\cong}{\rightarrow} A\otimes_k F$.  
	\end{enumerate}
\end{prop}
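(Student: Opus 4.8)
The plan is to prove the four statements equivalent via the cycle of implications: the preservation property implies the lifting property, which implies the splitting property, which implies the free-summand property, which in turn implies the preservation property. The workhorse observation, used repeatedly, is that a short exact sequence of $A$-modules $0\to M\to N\xrightarrow{\epsilon}M'\to 0$ is $\ea$-exact if and only if $\epsilon$ is an $\ea$-epimorphism: if $\psi$ is a $k$-linear map with $\epsilon=\epsilon\psi\epsilon$ then $\epsilon\psi=1_{M'}$ because $\epsilon$ is surjective, so $\epsilon$ is $k$-split; conversely, a $k$-section $\sigma$ of $\epsilon$ together with the induced $k$-retraction $1_N-\sigma\epsilon$ of the inclusion $M\to N$ witness $\ea$-admissibility at every spot.

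First I would prove that the preservation property implies the lifting property. Given an $\ea$-epimorphism $f:N\to M$ and an $A$-module map $g:P\to M$, the sequence $0\to\ker f\to N\xrightarrow{f}M\to 0$ is $\ea$-exact by the observation above, so applying $Hom_A(P,-)$ keeps it exact; surjectivity of $Hom_A(P,N)\to Hom_A(P,M)$ then provides a lift $\tilde f$ of $g$. The lifting property implies the splitting property immediately: in an $\ea$-short exact sequence $0\to M\to N\xrightarrow{\pi}P\to 0$ the epimorphism $\pi$ is an $\ea$-epimorphism, so lifting applied to $g=1_P$ yields an $A$-linear section of $\pi$, making the sequence $A$-split.

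For the splitting property implies the free-summand property, I would produce the canonical $\ea$-free presentation of $P$: let $F$ be the underlying $k$-module of $P$ and $\epsilon_P:A\otimes_k F\to P$, $a\otimes p\mapsto a\cdot p$, the counit of the free–forgetful adjunction. It is an $A$-module epimorphism which is $k$-split by $p\mapsto 1\otimes p$, so $0\to\ker\epsilon_P\to A\otimes_k F\xrightarrow{\epsilon_P}P\to 0$ is an $\ea$-short exact sequence; by the splitting property it is $A$-split, giving an $A$-module isomorphism $A\otimes_k F\cong P\oplus\ker\epsilon_P$, so one takes $Q=\ker\epsilon_P$. Finally, for the free-summand property implies the preservation property, I would first note that $A\otimes_k F$ is $\ea$-projective for every $k$-module $F$: the tensor–hom adjunction gives a natural isomorphism $Hom_A(A\otimes_k F,M)\cong Hom_k(F,M)$ (natural in the $A$-module $M$, with $M$ on the right regarded as a $k$-module), and an $\ea$-short exact sequence, being $k$-split, is carried by the additive functor $Hom_k(F,-)$ to an exact (indeed $k$-split) sequence, as in Lemma $\autoref{lemaddrespect}$; transporting along the natural isomorphism shows $Hom_A(A\otimes_k F,-)$ preserves $\ea$-short exact sequences. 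Then, given $P\oplus Q\cong A\otimes_k F$ and an $\ea$-short exact sequence $0\to M\to N\xrightarrow{\epsilon}M'\to 0$, left exactness of $Hom_A(P,-)$ is automatic, and surjectivity of $\epsilon_*:Hom_A(P,N)\to Hom_A(P,M')$ follows by composing a given $g:P\to M'$ with the projection $A\otimes_k F\to P$, lifting the result along the $\ea$-epimorphism $\epsilon$ using that $A\otimes_k F$ is $\ea$-projective together with the already-established implication from preservation to lifting, and then restricting along the inclusion $P\to A\otimes_k F$.

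The only step that is more than formal bookkeeping is the implication from the splitting property to the free-summand property: the crux is recognising the counit $A\otimes_k F\to P$ as the universal $k$-split epimorphism onto $P$, which is the relative analogue of presenting a module by a free one. Everything else is manipulation of adjunctions and additive functors, guided by the opening observation relating $\ea$-exactness of a short sequence to its epimorphism being $k$-split.
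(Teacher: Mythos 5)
Your argument is correct. Note that the paper does not actually prove this proposition; it simply cites MacLane's \emph{Homology} (pp.\ 261 and 277), so any comparison is with the standard treatment there rather than with an in-text argument. What you give is a complete, self-contained proof along the standard cycle of implications, and every step checks out: the opening observation that a short exact sequence of $A$-modules is $\ea$-exact precisely when its epimorphism is $k$-split (with $1_N-\sigma\epsilon$ witnessing admissibility at the monomorphism) correctly reconciles the paper's admissibility condition $\phi=\phi\psi\phi$ with the definition of $\ea$-epimorphism; the lifting and splitting implications are formal; the counit $\epsilon_P\colon A\otimes_k P\to P$, split over $k$ by $p\mapsto 1\otimes p$, is indeed the relative analogue of a free presentation and yields the direct-summand statement; and the final implication correctly combines the tensor--hom adjunction (so that $Hom_A(A\otimes_k F,-)\cong Hom_k(F,-)$ preserves $k$-split exactness, as in Lemma \autoref{lemaddrespect}) with restriction along the inclusion $P\to A\otimes_k F$. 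This is essentially the resolvent-pair argument MacLane gives, so your proof buys the reader a self-contained verification that the citation elides, at no loss of generality.
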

\begin{proof}
	See \cite{mac1995c} pages 261 for the equivalence of $1$,$2$ and $3$ and page 277 for the equivalence of $1$ and $4$.  
\end{proof}
For a homological algebraic theory to be possible, one needs enough projective (resp. injective) objects.  The next result shows that there are indeed enough $\ea$-projectives in $_AMod$.  
\begin{prop}[Enough $\ea$-projectives]\label{prop_enoughprojrels}
	If $A$ is a $k$-algebra and $M$ is an $A$-module then there exists an $\ea$-epimorphism $\epsilon: P \rightarrow M$ where $P$ is an $\ea$-projective.  
\end{prop}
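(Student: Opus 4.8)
The plan is to produce the required presentation by the standard \emph{relatively free} construction: induce up the underlying $k$-module of $M$ along the forgetful functor ${}_A\mathrm{Mod}\to{}_k\mathrm{Mod}$, and then use the characterization of \autoref{eaprojchar} to see that what comes out is $\ea$-projective.

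First I would set $P:=A\otimes_k M$, where $M$ is regarded merely as a $k$-module and $P$ carries the left $A$-action $a'\cdot(a\otimes_k m):=(a'a)\otimes_k m$ on the first tensor factor, and define $\epsilon\colon P\to M$ on elementary tensors by $\epsilon(a\otimes_k m):=a\cdot m$, using the genuine $A$-module structure of $M$. The assignment $(a,m)\mapsto a\cdot m$ is $k$-bilinear, so $\epsilon$ is a well-defined $k$-linear map, and it is an $A$-module homomorphism since $\epsilon\bigl(a'\cdot(a\otimes_k m)\bigr)=\epsilon\bigl((a'a)\otimes_k m\bigr)=(a'a)\cdot m=a'\cdot(a\cdot m)=a'\cdot\epsilon(a\otimes_k m)$.

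Next I would check that $\epsilon$ is an $\ea$-epimorphism. Consider the $k$-linear map $\sigma\colon M\to P$ given by $\sigma(m):=1\otimes_k m$. Then $\epsilon(\sigma(m))=\epsilon(1\otimes_k m)=1\cdot m=m$, so $\epsilon\circ\sigma=1_M$; in particular $\epsilon$ is surjective, hence an epimorphism in ${}_A\mathrm{Mod}$, and its underlying morphism of $k$-modules is split by $\sigma$, so by definition $\epsilon$ lies in the class $\ea$. It then remains to see that $P$ is $\ea$-projective, and this is immediate from the $\ea$-free direct summand property of \autoref{eaprojchar}: taking the $k$-module $F:=M$, the $A$-module $Q:=0$, and $\phi$ the identity yields an $A$-module isomorphism $P\oplus Q\overset{\cong}{\rightarrow}A\otimes_k F$, so $P$ satisfies that clause and hence all the equivalent ones. (Alternatively one can argue directly through the tensor–hom adjunction $Hom_A(A\otimes_k M,-)\cong Hom_k(M,-)$ after restriction of scalars: an $\ea$-exact sequence of $A$-modules is by definition $k$-split, and the additive functor $Hom_k(M,-)$ carries $k$-split exact sequences to exact sequences by \autoref{lemaddrespect}, so $Hom_A(P,-)$ is exact on $\ea$-exact sequences.)

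I do not expect a genuine obstacle here; the only points requiring care are bookkeeping ones: confirming that the definition of an $\ea$-epimorphism asks only for the underlying $k$-module map to be a split epimorphism — which is exactly what $\sigma$ provides — and confirming that the degenerate decomposition $P=P\oplus 0$ is admissible in the fourth clause of \autoref{eaprojchar}. Everything else is a routine verification of $k$-bilinearity and $A$-linearity.
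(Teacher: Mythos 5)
Your proposal is correct and follows essentially the same route as the paper: both take $P=A\otimes_k M$ with $\epsilon(a\otimes_k m)=a\cdot m$, split $k$-linearly by $m\mapsto 1\otimes_k m$, and invoke proposition \autoref{eaprojchar} to conclude that $A\otimes_k M$ is $\ea$-projective. Your additional verifications (the $A$-linearity of $\epsilon$ and the degenerate decomposition $P\oplus 0\cong A\otimes_k M$) merely spell out details the paper leaves implicit.
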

\begin{proof}
	By proposition~\ref{eaprojchar} $A \otimes_k M$ is $\ea$-projective.  Moreover the $A$-map $\zeta: A \otimes_k M \rightarrow M$ described on elementary tensors as $(\forall a \otimes_k m \in A \otimes_k M) \zeta (a \otimes_k m):= a\cdot m$ is epi and is $k$-split by the section $m \mapsto 1 \otimes_k m$.  
\end{proof}
Since there are enough projective objects, then one can build a resolution of any $A$-module by $\mathscr{E}_A^k$-projective modules. 
\begin{defn}[$\mathscr{E}_A^k$-projective resolution]
	If $M$ is an $A^e$-module then a resolution $P_{\star}$ of $M$ is called an {$\mathscr{E}_A^k$-projective resolution} of $M$ \textit{if and only if} each $P_i$ is an $\mathscr{E}_A^k$-projective module and $P_{\star}$ is an $\ea$-exact sequence.  
\end{defn}
\begin{ex} $\label{leembares1}$
	The augmented bar complex $\abr$ of $A$ is an $\rh$-projective resolution of $A$.  
\end{ex}
\begin{rremark}\label{leembares2}
	A nearly completely analogous argument to example~\ref{leembares1} shows that for any \\
	$(A,A)$-bimodule $M$, $M \otimes_{A} \abr$ is an $\rh$-projective resolution of $M$, see for details \cite{weibel1995introduction}.  
\end{rremark}
Following \cite{hilton1971course}, the $\mathscr{E}_A^k$-relative derived functors of the tensor product and the $Hom_A$-functors are introduced, as follows.  
\begin{defn}
	\textbf{$\mathscr{E}_A^k$-relative Tor}
	
	If $N$ is a right $A$-module, $M$ is an $A$-module and $P_{\star}$ is an $\ea$-projective resolution of $N$ then the $k$-modules
	$H_{\star}(P_{\star}$ $\otimes_{A} M)$ 
	are called the $\mathscr{E}_A^k$-relative Tor $k$-modules of $N$ with coefficients in the $A$-module $M$ and are denoted by $Tor^n_{\mathscr{E}_A^k}(N,M)$.  
\end{defn}
Let $H_{\star}$ (resp. $H^{\star}$) denote the (co)homology functor from the category of chain (co)complexes on an $A$-module to the category of $A$-modules.  The $\mathscr{E}_A^k$-relative Tor functors are defined as follows.  
\begin{ex}\label{tornonosamereltor1}
	The $\mathscr{E}_A^k$-relative Tor functors may differ from the usual \textit{(or "absolute")} Tor functors.  
	For example consider all the ${\mathbb{Z}}$-algebra ${\mathbb{Z}}$, any $\mathbb{Z}$-modules $N$ and $M$ are $\mathscr{E}_{\mathbb{Z}}^{\mathbb{Z}}$-projective.  In particular, this is true for the $\mathbb{Z}$-modules $\mathbb{Z}$ and $\mathbb{Z}/2\mathbb{Z}$.  Therefore $Tor_{\mathscr{E}_{\mathbb{Z}}^{\mathbb{Z}}}^n(\mathbb{Z},\mathbb{Z}/2\mathbb{Z})$ vanish for every positive $n$, however $Tor^n_{\mathbb{Z}}(\mathbb{Z}, \mathbb{Z}/2\mathbb{Z})$ does not.  For example, $Tor^1_{\mathbb{Z}}(\mathbb{Z}, \mathbb{Z}/2\mathbb{Z})\cong \mathbb{Z}/2\mathbb{Z}$ \cite{rotman1979introduction}.  
\end{ex}
Similarly there are $\ea$-relative Ext functors.  
\begin{defn}[$\mathscr{E}_A^k$-relative Ext]
	If $N$ is and $M$ are $A$-modules and $P_{\star}$ is an $\ea$-projective resolution of $N$ then the $k$-modules
	$H^{\star}(Hom_A(P_{\star}, M))$ 
	are called the $\mathscr{E}_A^k$-relative Ext $k$-modules of N with coefficients in the $A$-module $M$ and are denoted by $Ext^n_{\mathscr{E}_A^k}(N,M)$.  
\end{defn}
The $\mathscr{E}_A^k$-relative homological algebra is indeed well defined, since both the definitions of $\ea$-relative Ext and $\ea$-relative Tor are independent of the choice of $\ea$-projective resolution.  
\begin{thrm}[{$\ea$-Comparison theorem}]\label{thmcomparisonrelcohomolg} If $P_{\star}$ and $P'_{\star}$ are $\ea$-projective resolutions of an $A$-module $N$ then for any $A$-module $M$ there are natural isomorphisms: 
	\begin{equation}
	H^{\star}(Hom_{\ea}(P_{\star},N)) \overset{\cong}{\rightarrow}H^{\star}(Hom_{\ea}(P'_{\star},N))
	\end{equation}
	and 
	if $P_{\star}$ and $P'_{\star}$ are $\ea$-projective resolutions of a right $A$-module $N$ then:
	\begin{equation}
	H_{\star}(P_{\star}{ }\otimes_{A}N) \overset{\cong}{\rightarrow} H_{\star}(P'_{\star}{ }\otimes_{A}N)
	\end{equation}
\end{thrm}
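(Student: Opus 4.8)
The plan is to mimic the classical proof of the comparison theorem for projective resolutions, working everywhere with the relative class $\ea$ in place of the class of all epimorphisms. The engine of the argument is the \textbf{relative lifting property} of $\ea$-projective modules, established as part of \autoref{eaprojchar}: if $f\colon N\twoheadrightarrow M$ is an $\ea$-epimorphism and $g\colon P\to M$ is any $A$-module map out of an $\ea$-projective $P$, then $g$ factors through $f$. Since each $P_i$ in an $\ea$-projective resolution is $\ea$-projective and each differential $P'_i\to P'_{i-1}$ (together with the augmentation $P'_0\to N$) has image equal to the next kernel via an $\ea$-exact sequence — so that these maps, restricted appropriately, are $\ea$-epimorphisms — the hypotheses of the lifting property are always available.

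First I would construct, by induction on $i$, a chain map $\varphi_\star\colon P_\star\to P'_\star$ over $1_N$. At stage $0$, one has $\varepsilon\colon P_0\to N$ and $\varepsilon'\colon P'_0\twoheadrightarrow N$; since the augmented complex $P'_\star$ is $\ea$-exact, $\varepsilon'$ is an $\ea$-epimorphism, so the lifting property gives $\varphi_0\colon P_0\to P'_0$ with $\varepsilon'\circ\varphi_0=\varepsilon$. At stage $i$, the composite $P_i\to P_{i-1}\xrightarrow{\varphi_{i-1}}P'_{i-1}$ lands in $\ker(P'_{i-1}\to P'_{i-2})$ by a diagram chase, and $\ea$-exactness of $P'_\star$ identifies that kernel with the image of $P'_i$; the map $P'_i\to\ker$ is an $\ea$-epimorphism (the $k$-splittings $\psi_i$ from the definition of $\ea$-exactness restrict to $k$-splittings here), so $\varphi_i$ exists. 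Symmetrically one builds $\varphi'_\star\colon P'_\star\to P_\star$ over $1_N$. Then $\varphi'_\star\circ\varphi_\star$ and $1_{P_\star}$ are two chain maps $P_\star\to P_\star$ lifting $1_N$, and the same lifting-property induction — now producing maps $P_i\to P_{i+1}$ — yields a chain homotopy between them; likewise on the primed side. Hence $\varphi_\star$ is a homotopy equivalence of complexes of $A$-modules.

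Finally, applying the additive functors $Hom_A(-,M)$ and $-\otimes_A N$ to a chain homotopy equivalence yields a (co)chain homotopy equivalence, and homotopic (co)chain maps induce the same map on (co)homology; therefore the induced maps $H^\star(Hom_A(P_\star,M))\to H^\star(Hom_A(P'_\star,M))$ and $H_\star(P_\star\otimes_A N)\to H_\star(P'_\star\otimes_A N)$ are isomorphisms. Naturality follows because the lifted chain maps are unique up to homotopy, so a morphism of resolutions (or of the coefficient module) commutes with these comparison maps up to homotopy, hence strictly on (co)homology.

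The main obstacle, and the only place the relative theory genuinely intervenes, is the bookkeeping that guarantees every map one wishes to lift against is actually an $\ea$-epimorphism — i.e.\ that the relevant surjections onto kernels are $k$-split. This is exactly where the $k$-splittings $\psi_i$ built into the definition of an $\ea$-exact sequence are used: one must check that restricting a $\psi_i$ to the appropriate submodule still provides a $k$-section, so that the induced surjection $P'_i\twoheadrightarrow\ker(\partial_{i-1})$ lies in $\ea$. Once this is in hand, every step is formally identical to the classical comparison theorem, and the rest is routine diagram chasing that I will not spell out here.
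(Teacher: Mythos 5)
Your argument is correct and is exactly the route the paper takes: the paper's proof is just the remark ``nearly identical to the usual comparison theorem'' with a citation to MacLane, and your write-up carries out precisely that adaptation, including the one genuinely relative point (that the corestrictions $P'_i\twoheadrightarrow\ker(\partial'_{i-1})$ remain $k$-split via the admissibility maps $\psi_i$, since $\phi=\phi\circ\psi\circ\phi$ implies $\psi$ restricted to $\operatorname{im}(\phi)$ is a section of the corestriction). No gaps.
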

\begin{proof}
	Nearly identical to the usual comparison theorem, see \cite{mac1995c}.  
\end{proof}
\begin{ex}
	The $Ext_{\zz}$ and $\ezz$-relative Ext may differ.  For example, one easily computes $Ext_{\zz}^1(\zz,\zz/2\zz) \cong \zz/2\zz$.  However, $Ext_{\ezz}^1(\zz,\zz/2\zz) \cong 0$.  
\end{ex}
%
%
%
Analogous to the fact that for any $A$-module $P$, $P$ is projective if and only if $Ext_A^1(P,N)\cong 0$ for every $A$-module $N$ there is the following result, which can be found in \citep[Chapter IX]{hilton1971course}.  
\begin{prop}\label{projcharext}
	$P$ is an $\ea$-projective module if and only if for every $A$-module $N$: \begin{equation}
	Ext^1_{\ea}(P,N)\cong 0
	\end{equation}
\end{prop}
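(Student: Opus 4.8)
The plan is to transcribe the classical equivalence ``$P$ projective $\iff Ext^1_A(P,-)$ vanishes'' into the relative setting, leaning on \autoref{eaprojchar}, \autoref{propshottolong}, and the $\ea$-comparison theorem \autoref{thmcomparisonrelcohomolg}.

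First I would handle the direction ``$\ea$-projective $\implies$ vanishing''. If $P$ is $\ea$-projective then $P$ serves as its own $\ea$-projective resolution: the augmented complex $\cdots\to 0\to 0\to P\xrightarrow{1_P}P\to 0$ is $\ea$-exact — take the identity of $P$ in degree $0$ and the zero maps elsewhere as the $k$-linear witnesses of \eqref{reladmissibility} — and each of its terms is $\ea$-projective. Deleting $P$ and applying $Hom_A(-,N)$ produces the complex $0\to Hom_A(P,N)\to 0\to 0\to\cdots$ concentrated in degree $0$, whose cohomology vanishes in positive degrees; by \autoref{thmcomparisonrelcohomolg} this computes $Ext^{\star}_{\ea}(P,N)$, so $Ext^1_{\ea}(P,N)\cong 0$ for every $A$-module $N$.

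For the converse I would use the splitting criterion of \autoref{eaprojchar}: it suffices to show that every $\ea$-short exact sequence $\mathfrak{E}\colon\ 0\to M\xrightarrow{i}N\xrightarrow{\pi}P\to 0$ is $A$-split. Feeding $\mathfrak{E}$ into the contravariant long exact sequence of \autoref{propshottolong} with coefficients $X=M$, its low-degree portion is
\begin{equation*}
Hom_A(N,M)\xrightarrow{\,i^{\star}\,}Hom_A(M,M)\longrightarrow Ext^1_{\ea}(P,M)\longrightarrow\cdots,
\end{equation*}
where I use the natural identification $Ext^0_{\ea}(-,M)\cong Hom_A(-,M)$ coming from left-exactness of $Hom_A(-,M)$ on a deleted $\ea$-projective resolution. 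Since $Ext^1_{\ea}(P,M)\cong 0$ by hypothesis, $i^{\star}$ is surjective, so $1_M$ lifts to an $A$-module map $r\colon N\to M$ with $r\circ i=1_M$; hence $\mathfrak{E}$ is $A$-split, and \autoref{eaprojchar} then gives that $P$ is $\ea$-projective.

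The one subtlety — hardly an obstacle — is that the long exact sequence of \autoref{propshottolong} is valid only for $\ea$-short exact sequences (the discussion following \autoref{propshottolong} exhibits short exact sequences for which it fails), but this hypothesis is exactly what $\mathfrak{E}$ satisfies by construction. Everything else is routine relative bookkeeping.
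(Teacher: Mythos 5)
Your proof is correct, and the forward direction (``$\ea$-projective $\implies Ext^1_{\ea}(P,-)=0$'' via the length-zero resolution $0\to P\xrightarrow{1_P}P\to 0$ and the comparison theorem \autoref{thmcomparisonrelcohomolg}) is exactly the paper's argument. For the converse you take a genuinely different, though equally standard, route. The paper fixes $P$ in the \emph{first} slot and runs the covariant long exact sequence of \autoref{propshottolong} in the second variable along an arbitrary $\ea$-short exact sequence $0\to N'\to N\to N''\to 0$: the vanishing of $Ext^1_{\ea}(P,N')$ forces $Hom_A(P,N)\to Hom_A(P,N'')$ to be surjective, so $Hom_A(P,-)$ preserves $\ea$-short exactness, which is the defining property of $\ea$-projectivity. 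You instead specialize to $\ea$-short exact sequences \emph{ending} in $P$, run the contravariant long exact sequence $Ext_{\ea}(-,M)$, and use the vanishing of $Ext^1_{\ea}(P,M)$ to produce a retraction $r$ of the inclusion $i$, concluding via the $\ea$-splitting property of \autoref{eaprojchar}. Both arguments lean on \autoref{propshottolong} and are comparable in length; yours has the small advantage of only needing the vanishing of $Ext^1_{\ea}(P,M)$ for the single module $M$ occurring as the kernel of a given sequence onto $P$ (so it localizes the hypothesis), while the paper's has the advantage of verifying the definition of $\ea$-projectivity directly rather than routing through the equivalence of characterizations in \autoref{eaprojchar}. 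Your closing remark that \autoref{propshottolong} applies only because $\mathfrak{E}$ is $\ea$-exact by construction is exactly the right point to flag.
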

\subsection{Hochschild (Co)homological Dimension}
Since $CB_{\star}(A)$ is an $\rh$-projective resolution of $A$ then theorem~\ref{thmcomparisonrelcohomolg} and the definition of the $Ext_{\rh}^{\star}(A,-)$ functors imply that the Hochschild cohomology of $A$ with coefficients in of \cite{hochschild1945onhcomology}, denoted by $HH^{\star}(A,N)$, can be expressed using the $Ext_{\rh}^{\star}$.  We maintain this perspective throughout this entire article.  
\begin{prop}\label{hochrelderived}
	For every $A^e$ module $N$ there are $k$-module isomorphisms, natural in $N$: \begin{equation}
	HH^{\star}(A,N) \overset{\cong}{\rightarrow} Ext_{\rh}^{\star}(A,N)
	\end{equation}
	Taking short $\rh$-exact sequences to isomorphic long exact sequences.  
\end{prop}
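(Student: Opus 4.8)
The plan is to recognise this proposition as essentially a repackaging of the definitions, together with the two structural facts already established for $\rh$-relative homological algebra. First I would unwind the definition of $HH^\star(A,N)$: by construction it is the cohomology of the cocomplex $Hom_{A^e}(\cbr,N)$ equipped with the coboundary $Hom_{A^e}(b'_\star,N)$. Next I would invoke the fact, recorded in \autoref{leembares1}, that the augmented bar complex $\abr$ is an $\rh$-projective resolution of the $A^e$-module $A$: its terms $A^{\otimes n+2}$ are $\rh$-projective by \autoref{propan}, the complex is $\rh$-exact by \autoref{exadmis}, and it augments onto $A$. Deleting the augmentation $b'_0\colon A\otimes_k A\to A$ leaves the deleted $\rh$-projective resolution $\cbr$ of $A$. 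Feeding this particular resolution into the definition of $Ext^\star_{\rh}(A,N)$ gives, on the nose, $Ext^\star_{\rh}(A,N)=H^\star\big(Hom_{A^e}(\cbr,N)\big)=HH^\star(A,N)$.

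The next step is to promote this equality of $k$-modules to an isomorphism natural in $N$ and to remove the dependence on the chosen resolution. This is exactly what the $\ea$-comparison theorem \autoref{thmcomparisonrelcohomolg} supplies (applied to the $k$-algebra $A^e$): any two $\rh$-projective resolutions of $A$ yield canonically isomorphic cohomology, and since $Hom_{A^e}(\cbr,-)$ is a functor in the second variable and the comparison chain maps may be chosen natural, the resulting isomorphism $HH^\star(A,-)\overset{\cong}{\rightarrow}Ext^\star_{\rh}(A,-)$ is natural in $N$. For the final clause, given a short $\rh$-exact sequence $0\to N'\to N\to N''\to 0$ of $A^e$-modules, \autoref{propshottolong} produces the long exact sequence in $Ext^\star_{\rh}(A,-)$; transporting it along the natural isomorphism just obtained yields the asserted long exact sequence in $HH^\star(A,-)$.

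The one point I expect to require genuine care — rather than being purely formal — is the compatibility of the natural isomorphism with the connecting homomorphisms of the two long exact sequences, i.e.\ that it constitutes a morphism of $\delta$-functors and not merely a family of degreewise isomorphisms. The clean way to settle this is to identify the map in degree $0$, where both sides compute $Z_A(N)\cong Hom_{A^e}(A,N)$ and the comparison is visibly the identity, and then appeal to the uniqueness of the extension of a natural transformation of $\delta$-functors once it is pinned down in degree $0$; alternatively, one lifts the given short $\rh$-exact sequence to a short exact sequence of $\rh$-projective resolutions (possible since the bar terms are $\rh$-projective, using \autoref{thmcomparisonrelcohomolg}) and runs the snake lemma, reading off that the two connecting maps agree. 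Everything else is bookkeeping.
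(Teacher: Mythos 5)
Your proposal is correct and follows the same route as the paper, which justifies the proposition in a single line by noting that $\cbr$ is an $\rh$-projective resolution of $A$ and invoking the comparison theorem together with the definition of $Ext_{\rh}^{\star}(A,-)$. Your additional care about naturality and the compatibility of connecting homomorphisms goes beyond what the paper records, but it is consistent with and strengthens the same argument.
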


\begin{defn}[Hochschild Homology]
	%
	The Hochschild homology $HH_{\star}(A,N)$ of a $k$-algebra $A$ with coefficient in the $(A,A)$-bimodule $N$ is defined as: \begin{equation}
	HH_{\star}(A,N) := H_{\star}(P_{\star}{ }\otimes_{A}N)
	\end{equation}
	where $P_{\star}$ is an $\rh$-projective resolution of $A$.  
\end{defn}
Following the results of~\cite{HKR}, the Hochschild cohomology has become the central tool for obtaining non-commutative algebraic geometric analogues of classical commutative algebraic geometric notions.  The one of central focus in this paper, is the Hochschild cohomological dimension,  
\begin{defn}[Hochschild cohomological dimension]\label{defHomdim}
	The {Hochschild cohomological dimension} of a $k$-algebra $A$ is defined as: \begin{equation}
	HCdim(A|k):= \underset{M\in _{A^e}Mod}{sup} ( sup \{n \in \mathbb{N}^{\#} | HH^{n}(A,M) \not{\cong} 0 \} ).  
	\end{equation}  
	Where $\mathbb{N}^{\#}$ is the ordered set of extended natural numbers.  
\end{defn}
The Hochschild cohomological dimension may be related to the following cohomological dimension.
\begin{defn}[$\ea$-projective dimension]
	If $n$ is an natural number and $M$ is an $A$-module then $M$ is said to be of {$\ea$-projective dimension} at most $n$ if and only if there exists a deleted $\ea$-projective resolution of $M$ of length $n$.  If no such $\ea$-projective resolution of $M$ exists then $M$ is said to be of $\ea$-projective dimension $\infty$.  The $\ea$-projective dimension of $M$ is denoted \textbf{$pd_{\ea}(M)$}.  
\end{defn}
The following is a translation of a classical homological algebraic result into the setting of $\rh$-projective dimension, $\nf$ and Hochschild cohomology.  Here, $\nf\triangleq Ker(b_{n-1}')$ and $b_{n-1}'$ is the $(n-1)^{th}$ differential in the augmented Bar resolution of $A$; see \cite{weibel1995introduction} for details on the augmenter Bar complex.  
\begin{thrm}\label{thrmcharnsmooth}
	For every natural number $n$, the following are equivalent:
	\begin{enumerate}
		\item $HCdim(A|k)\leq n$
		\item $A$ is of $\rh$-projective dimension at most $n$
		\item $\nf$ is an $\rh$-projective module.  
		\item $HH^{n+1}(A,M)$ vanishes for every $(A,A)$-bimodule $M$.  
		\item $Ext_{\rh}^{n+1}(A,M)$ vanishes for every $A^e$-module $M$.  
	\end{enumerate}
\end{thrm}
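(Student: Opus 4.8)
The plan is to pivot everything through Proposition \autoref{hochrelderived}, which identifies $HH^\star(A,N)$ with $Ext^\star_{\rh}(A,N)$, and through the bar resolution $CB_\star(A)$, which by Example \autoref{leembares1} is an $\rh$-projective resolution of $A$ whose $i$-th syzygy is exactly $\Omega^i(A/k)$, so that the $n$-th syzygy is $\nf$. Two of the equivalences come for free: $(4)\Leftrightarrow(5)$ is Proposition \autoref{hochrelderived} together with the identification of $(A,A)$-bimodules with $A^e$-modules (Proposition \autoref{propaeaabimdo1}), and $(1)\Rightarrow(4)$ is immediate from the definition of $HCdim(A/k)$ since $n+1>n$. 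The real content is the chain $(5)\Rightarrow(3)\Rightarrow(2)\Rightarrow(1)$, which closes the loop $1\Rightarrow 4\Leftrightarrow 5\Rightarrow 3\Rightarrow 2\Rightarrow 1$.

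For $(5)\Rightarrow(3)$ I would run a dimension-shifting argument. The truncated augmented bar complex $0\to\nf\to CB_{n-1}(A)\to\cdots\to CB_0(A)\to A\to 0$ is $\rh$-exact by Example \autoref{exadmis}, and it splits into short $\rh$-exact sequences $0\to\Omega^{i+1}(A/k)\to CB_i(A)\to\Omega^i(A/k)\to 0$ for $0\le i\le n-1$, with $\Omega^0(A/k)=A$. Applying the long exact $Ext_{\rh}$-sequence of Proposition \autoref{propshottolong} to each of these and using that every $CB_i(A)$ is $\rh$-projective (Proposition \autoref{propan}), hence has vanishing higher relative $Ext$, the connecting homomorphisms telescope to a natural isomorphism $Ext^1_{\rh}(\nf,M)\cong Ext^{n+1}_{\rh}(A,M)$ (a form of the dimension shifting of Proposition \autoref{propext11}). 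Hypothesis $(5)$ forces the right-hand side to vanish for every $M$, so $\nf$ is $\rh$-projective by Proposition \autoref{projcharext}.

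For $(3)\Rightarrow(2)$: granting that $\nf$ is $\rh$-projective, the same truncation $0\to\nf\to CB_{n-1}(A)\to\cdots\to CB_0(A)\to A\to 0$ is now an honest $\rh$-projective resolution of $A$ of length $n$, since it is $\rh$-exact (Example \autoref{exadmis}) and every term is $\rh$-projective (Proposition \autoref{propan} and the hypothesis); hence $pd_{\rh}(A)\le n$. For $(2)\Rightarrow(1)$: such a resolution has zero in every degree exceeding $n$, so by Theorem \autoref{thmcomparisonrelcohomolg} and Proposition \autoref{hochrelderived} we get $HH^m(A,M)\cong Ext^m_{\rh}(A,M)=0$ for all $m>n$ and all $M$, which is exactly the assertion $HCdim(A/k)\le n$. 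This closes the cycle and gives the equivalence of all five statements.

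I do not expect a deep obstacle; the point demanding care is that the entire argument takes place in the relative ($\rh$-) world, so at each step one must verify that the short exact sequences involved are genuinely $\rh$-exact, so that the long exact sequence of Proposition \autoref{propshottolong} is available, and that $\rh$-projectivity of the syzygy $\nf$ really does license truncating the bar resolution, since relative homological algebra need not inherit every feature of the absolute theory (cf.\ Remark \autoref{tornonosamereltor1}). It is also worth spelling out the low cases $n=0$, where $\nf=A$ and $(2)\Leftrightarrow(3)$ is a tautology, and $n=1$, so that the telescoping dimension shift is never applied vacuously.
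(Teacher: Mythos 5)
Your proposal is correct and follows essentially the same route as the paper: $1\Rightarrow 4$ from the definition, $4\Leftrightarrow 5$ via Proposition \autoref{hochrelderived}, the dimension shift $Ext^{1}_{\rh}(\nf,M)\cong Ext^{n+1}_{\rh}(A,M)$ combined with Proposition \autoref{projcharext} to handle $\nf$, the truncated bar resolution for $3\Rightarrow 2$, and the comparison theorem for $2\Rightarrow 1$. Your telescoping derivation of the dimension shift is just an in-line proof of Proposition \autoref{propext11}, which the paper instead cites directly, so there is no substantive difference.
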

\begin{proof} (1 $\Rightarrow$ 4)    By definition of the Hochschild cohomological dimension.  (4 $\Leftrightarrow$ 5)  By proposition~\ref{hochrelderived}.  (3 $\Rightarrow$ 2)  Since $\nf$ is $\rh$-projective: 
	$$
	0 \rightarrow \nf \rightarrow CB_{n-1}(A) \overset{b_{n-1}'}{\rightarrow} .... \overset{b_{0}'}{\rightarrow} A \rightarrow 0
	$$
	is a $\rh$-projective resolution of $A$ of length $n$.  Therefore $pd_{\rh}(A)\leq n$.  
	
	(3 $\Leftrightarrow$ 4) By proposition~\ref{propext11} there are isomorphism natural in $M$: 
	$$
	(\forall M \in _{A^e}Mod)\, HH^{1+n}(A,M) \cong Ext_{\rh}^{1+n}(A,M) \cong Ext_{\rh}^{1}(\nf,M).  
	$$
	Therefore for every $A^e$-module $M$: 
	$$
	Ext_{\rh}^{1}(\nf,M)\cong 0 \mbox{ if and only if } HH^{1+n}(A,M)\cong 0.  
	$$        By proposition~\ref{projcharext} $\nf$ is $\ea$-projective if and only if $
	Ext_{\rh}^{1}(\nf,M)\cong 0.$
	
	(2 $\Rightarrow$ 1)
	If $A$ admits an $\rh$-projective resolution $P_{\star}$ of length $n$ then theorem~\ref{thmcomparisonrelcohomolg} implies there are natural isomorphisms of $A^e$-modules: \begin{equation}
	(\forall M \in _{A^e}Mod) Ext_{\rh}^{\star}(A,M) \cong H^{\star}(Hom_{A^e}(P_{\star}, M)).  
	\label{ememfeoeomfoemf3}
	\end{equation}
	Since $P_{\star}$ is of length $n$ all the maps $p_j: P_{j+1}\rightarrow P_j$ are the zero maps therefore so are the maps $p_j^{\star}: Hom_{A^e}(P_{j})\rightarrow Hom_{A^e}(P_{j+1})$.  Whence~\eqref{ememfeoeomfoemf3} entails that for all $j > n+1$ $Ext_{\rh}^{\star}(A,M)$ vanishes.  By proposition~\ref{hochrelderived} this is equivalent to $HH^{j}(A,M)$ vanishing for all $j>n+1$ for all $M \in _{A^e}Mod$.  Hence $A$ is of Hochschild cohomological dimension at most $n$.  
\end{proof}
Next, the non-commutative geometric object focused on in this paper is reviewed.  
\subsection{Quasi-Free Algebras}
Many of the properties of an algebra are summarized by its Hochschild cohomological dimension, see \cite{HKR,beck1967triples,kratsios2015bounding} for example.  However, this article focuses on the following non-commutative analogue of smoothness of~\cite{grothendieck2002rev}, introduced by~\cite{cuntz1995algebra}\footnote{First introduced by Cuntz and Quillen in \cite{cuntz1995algebra}, due to their lifting property the quasi-free $k$-algebras are considered a non-commutative analogue to smooth $k$-algebras; that is $k$-algebras for which $\Omega_{A|k}$ is a projective $A$-module.  }.  This notion of smoothness has played a key role in a number of places in noncommutative algebraic geometry, especially in the cyclic (co)homology of \cite{connes1831noncommutative}.  
\begin{defn}[Quasi-free $k$-algebra]\label{defnquasifreecqW}
	A $k$-algebra for which all $k$-Hochschild extensions of $A$ by an $(A,A)$-bimodule lift is called a \textbf{quasi-free $k$-algebra}.  
\end{defn}
\begin{cor}\label{propQF}    
	For a $k$-algebra $A$, the following are equivalent:
	\begin{enumerate}
		\item $A$ is $HCdim(A|k)\leq 1$.  
		\item $\ncof$ is a $\rh$-projective $A^e$-module. 
		\item $A$ is quasi-free.  
	\end{enumerate}
\end{cor}
One typically construct quasi-free algebras using Morita equivalences.  However, the next proposition, which extends a result of \cite{cuntz1995algebra} to the case where $k$ need not be a field, may also be used without any such restrictions on $k$.  
\begin{prop}\label{lemqfromfqfandprojrel1}
	If $A$ is a quasi-free $k$-algebra and $P$ is an $\rh$-projective $(A,A)$-bimodule then $T_A(P)$ is a quasi-free $A$-algebra.  
\end{prop}
\begin{proof}
	Differed until the appendix.  
\end{proof}
\begin{ex}$\label{exqffreeeeeee1}$
	Let $n \in \mathbb{N}$.  The $\zz$-algebra $T_{\zz}\left(\underset{i=0}{\overset{n}{\bigoplus}} \zz\right)$ is quasi-free.  
\end{ex}
\begin{proof}
	Since all free $\zz$-modules are projective $\zz$-modules and all projective $\zz$-modules are $\ezz$-projective modules, the free $\zz$-module $\underset{i=0}{\overset{n}{\bigoplus}} \zz$ is $\ezz$-projective.  Whence proposition~\ref{lemqfromfqfandprojrel1} implies $T_{\zz}\left(\underset{i=0}{\overset{n}{\bigoplus}} \zz\right)$ is a quasi-free $\zz$-algebra.  
\end{proof}

\begin{ex}
	If $A$ is a quasi-free $k$-algebra then $T_A(\ncof)$ is a quasi-free $A$-algebra.  
\end{ex}
\begin{proof}
	By corollary~\ref{propQF} if $A$ is quasi-free $\ncof$ must be an $\rh$-projective $(A,A)$-bimodule; whence proposition~\ref{lemqfromfqfandprojrel1} applies.  
\end{proof}

Next, we overview some relevant dimension-theoretic notions and terminology.  
\subsection{Classical Cohomological Dimensions}
We remind the reader of a few important algebraic invariants which we will require.  The reader unfamiliar with certain of these notions from commutative algebra and algebraic geometry is referred to \cite{eisenbud1995commutative,hartshorne1977algebraic} or to \cite{stacks-project}.  

\begin{defn}[$A$-Flat Dimension]\label{defn:fd}
	If $A$ is a commutative ring then the $A$-flat dimension $fd_{A}(M)$ of an $A$-module $M$ is the extended natural number $n$, defined as the shortest \textit{length} of a resolution of $M$ by $A$-flat $A$-modules. If no such finite $n$ exists $n$ is taken to be $\infty$.   
\end{defn}
We will require the following result, whose proof can be found in \cite{weibel1995introduction,kratsios2015bounding}.  
\begin{prop}\label{propflatcalc}
	If $n$ is a positive integer and if there exists a  regular sequence $x_1,..,x_n$ in $A$ of length $n$ then: \begin{equation}
	n = fd_A(A/(x_1,..,x_n)).  
	\end{equation}
\end{prop}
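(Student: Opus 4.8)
The plan is to compute $fd_A(A/(x_1,\dots,x_n))$ by using the Koszul complex $K_\star(A;x_1,\dots,x_n)$ established in the previous lemma and then checking that $n$ is both an upper bound and a lower bound for the flat dimension. First I would observe that, since $x_1,\dots,x_n$ is a regular sequence in $A$, the Koszul complex $K_\star(A;x_1,\dots,x_n)$ is an $A$-\emph{free} resolution of $A/(x_1,\dots,x_n)$ of length exactly $n$. Every free $A$-module is flat, so this is in particular an $A$-flat resolution of length $n$; by Definition $\autoref{defn:fd}$ this immediately gives the upper bound $fd_A(A/(x_1,\dots,x_n)) \leq n$.

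For the reverse inequality I would use the $\mathrm{Tor}$-characterization of flat dimension in Lemma $\autoref{char:fd}$: it suffices to exhibit an $A$-module $N$ for which $\mathrm{Tor}^n_A(A/(x_1,\dots,x_n), N)$ is nonzero, since this forbids the flat dimension from being $\leq n-1$. The natural choice is $N = A/(x_1,\dots,x_n)$ itself. To compute $\mathrm{Tor}^n_A(A/(x_1,\dots,x_n), A/(x_1,\dots,x_n))$ I would tensor the deleted Koszul resolution $K_\star(A;x_1,\dots,x_n)$ with $A/(x_1,\dots,x_n)$ over $A$. Because each Koszul differential $d_m$ is given by a matrix whose entries are the $x_{i_j}$, all of which lie in the ideal $(x_1,\dots,x_n)$, every differential in the complex $K_\star(A;x_1,\dots,x_n)\otimes_A A/(x_1,\dots,x_n)$ becomes the zero map. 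Hence the homology in each degree is just the module sitting there; in top degree $n$ we get $\bigwedge^n_A(A^n)\otimes_A A/(x_1,\dots,x_n) \cong A/(x_1,\dots,x_n)$ by Lemma $\autoref{exteriorfreelem}$ (with $d=n$, so $\binom{n}{n}=1$), which is nonzero since $A$ is a nonzero commutative ring and $(x_1,\dots,x_n)$ is a proper ideal (each $x_i$ being a non-unit regular element). Therefore $\mathrm{Tor}^n_A(A/(x_1,\dots,x_n), A/(x_1,\dots,x_n)) \neq 0$, and Lemma $\autoref{char:fd}$ gives $fd_A(A/(x_1,\dots,x_n)) \geq n$.

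Combining the two inequalities yields $fd_A(A/(x_1,\dots,x_n)) = n$, as claimed. The main obstacle, and the only point requiring genuine care rather than bookkeeping, is verifying that $(x_1,\dots,x_n)$ is a proper ideal so that the top $\mathrm{Tor}$ module does not collapse to zero: this relies on the convention built into the definition of a regular sequence, namely that each $x_i$ acts non-surjectively on $A/(x_1,\dots,x_{i-1})$, hence in particular $1 \notin (x_1,\dots,x_n)$. Everything else is a direct application of the Koszul resolution together with Lemmas $\autoref{char:fd}$ and $\autoref{exteriorfreelem}$.
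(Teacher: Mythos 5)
Your proposal is correct and follows essentially the same route as the paper: the Koszul resolution gives the upper bound, and tensoring the deleted Koszul complex with $A/(x_1,\dots,x_n)$ kills all the differentials, so the nonvanishing of the top $\mathrm{Tor}$ gives the lower bound via Lemma \autoref{char:fd}. Your explicit check that $(x_1,\dots,x_n)$ is a proper ideal (so that the top homology module is genuinely nonzero) is a small point the paper glosses over, and it is a worthwhile addition.
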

One more ingredient related to the flat dimension will soon be needed.  
\begin{prop} $\label{fdloc}$
	If $A$ is a commutative ring and ${\mathfrak{m}}$ is a maximal ideal of $A$ then for any $A$-module $M$ $fd_{{{A_{\mathfrak{m}}}}}(M_{\mathfrak{m}})$ is a lower-bound for $fd_{A}(M)$.  
\end{prop}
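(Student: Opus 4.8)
The plan is to reduce everything to the homological characterization of flat dimension recorded in Lemma \autoref{char:fd} and then to exploit the exactness and flatness of localization. First, if $fd_A(M)=\infty$ there is nothing to prove, so I assume $n:=fd_A(M)<\infty$. By Lemma \autoref{char:fd}, $Tor^{n+1}_A(M,N)\cong 0$ for every $A$-module $N$. What I want to deduce is the analogous vanishing over the local ring $A_{\mathfrak{m}}$: namely that $Tor^{n+1}_{A_{\mathfrak{m}}}(M_{\mathfrak{m}},L)\cong 0$ for every $A_{\mathfrak{m}}$-module $L$, since Lemma \autoref{char:fd} applied over the ring $A_{\mathfrak{m}}$ then yields $fd_{A_{\mathfrak{m}}}(M_{\mathfrak{m}})\leq n=fd_A(M)$, which is exactly the asserted lower bound.

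The bridge between the two rings is a base-change isomorphism for $Tor$ along the flat ring map $A\to A_{\mathfrak{m}}$. I would choose a deleted projective resolution $P_{\star}\to M$ of $A$-modules. Since localization at $\mathfrak{m}$ is exact and sends a projective $A$-module (a direct summand of a free module) to a projective $A_{\mathfrak{m}}$-module, $P_{\star}\otimes_A A_{\mathfrak{m}}\to M_{\mathfrak{m}}$ is a deleted projective resolution of $M_{\mathfrak{m}}$ over $A_{\mathfrak{m}}$. Now let $L$ be any $A_{\mathfrak{m}}$-module; by restriction of scalars along $A\to A_{\mathfrak{m}}$ it is in particular an $A$-module, and since $L$ is already an $A_{\mathfrak{m}}$-module one has $(P_{\star}\otimes_A A_{\mathfrak{m}})\otimes_{A_{\mathfrak{m}}}L\cong P_{\star}\otimes_A L$, whence
\[
Tor^{i}_{A_{\mathfrak{m}}}(M_{\mathfrak{m}},L)\cong H_i\!\left((P_{\star}\otimes_A A_{\mathfrak{m}})\otimes_{A_{\mathfrak{m}}}L\right)\cong H_i\!\left(P_{\star}\otimes_A L\right)\cong Tor^{i}_A(M,L),
\]
naturally in $L$.

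Applying this with $i=n+1$: since $Tor^{n+1}_A(M,L)\cong 0$ for every $A$-module $L$, in particular for every $A_{\mathfrak{m}}$-module $L$, the displayed isomorphism gives $Tor^{n+1}_{A_{\mathfrak{m}}}(M_{\mathfrak{m}},L)\cong 0$ for every $A_{\mathfrak{m}}$-module $L$, and Lemma \autoref{char:fd} (over $A_{\mathfrak{m}}$) finishes the argument. The only point demanding care — and the step I regard as the crux rather than a routine computation — is that the flat-dimension criterion quantifies over modules of the base ring on each side, over $A$ in one instance and over $A_{\mathfrak{m}}$ in the other, so one genuinely has to move between the two categories of modules; the base-change isomorphism above is precisely what makes that passage legitimate, once one observes that every $A_{\mathfrak{m}}$-module is an $A$-module by restriction of scalars.
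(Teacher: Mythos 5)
Your argument is correct, but it takes a different route from the paper's. The paper argues directly at the level of resolutions: it takes a deleted $A$-flat resolution $F_{\star}$ of $M$ of minimal length $d=fd_A(M)$, observes that localization is exact and that each $A_{\mathfrak{m}}\otimes_A F_i$ is a flat $A_{\mathfrak{m}}$-module (being a tensor product of flat modules), and concludes that $A_{\mathfrak{m}}\otimes_A F_{\star}$ is an $A_{\mathfrak{m}}$-flat resolution of $M_{\mathfrak{m}}$ of length $d$, so $fd_{A_{\mathfrak{m}}}(M_{\mathfrak{m}})\leq d$ by the definition of flat dimension alone. You instead pass through the homological characterization of Lemma \autoref{char:fd} on both sides and prove a flat base-change isomorphism $Tor^{i}_{A_{\mathfrak{m}}}(M_{\mathfrak{m}},L)\cong Tor^{i}_{A}(M,L)$ by localizing a projective resolution; note that this resolution need not have length $n$, which is harmless since you only use it to compute Tor. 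The paper's proof is shorter and needs no machinery beyond the definition; yours is slightly longer but makes explicit the point you rightly flag as the crux — that the vanishing criterion quantifies over $A$-modules on one side and over $A_{\mathfrak{m}}$-modules on the other, and that restriction of scalars together with the cancellation $(P\otimes_A A_{\mathfrak{m}})\otimes_{A_{\mathfrak{m}}}L\cong P\otimes_A L$ is what bridges the two module categories. Both arguments generalize verbatim to any flat ring homomorphism $A\to B$ in place of $A\to A_{\mathfrak{m}}$.
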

\begin{defn} $\label{defn:pd}$
	\textbf{${A}$-Projective Dimension}
	
	If $A$ is a commutative ring and $M$ is an $A$-module then the ${A}$-projective dimension $pd_{A}(M)$ of $M$ is the extended natural number $n$, defined as the shortest \textit{length} of a deleted ${A}$-projective resolution of $M$.  If no such finite $n$ exists $n$ is taken to be $\infty$.   
\end{defn}
\begin{lem} $\label{lem:fdpd}$
	
	If $A$ is a commutative ring and $M$ is an $A$-module then $fd_A(M)\leq pd_A(M)$.  
\end{lem}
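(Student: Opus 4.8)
The plan is to exploit the fact that every projective module is flat, so that a projective resolution is in particular a flat resolution, and then to read off the inequality directly from the definitions of the two dimensions (Definition $\autoref{defn:fd}$ and Definition $\autoref{defn:pd}$).

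First I would dispose of the trivial case: if $pd_A(M) = \infty$ there is nothing to prove, since $fd_A(M) \leq \infty = pd_A(M)$ by the convention adopted in Definition $\autoref{defn:fd}$. So I may assume $pd_A(M) = n$ for some $n \in \mathbb{N}$. By definition of the $A$-projective dimension there is a deleted $A$-projective resolution
\begin{equation*}
0 \rightarrow P_n \rightarrow \cdots \rightarrow P_1 \rightarrow P_0 \rightarrow 0
\end{equation*}
of $M$ of length $n$.

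The key step is to observe that each $P_i$, being a projective $A$-module, is a flat $A$-module (this is the standard fact that projective modules are flat, see \cite{rotman1979introduction}, which is already invoked elsewhere in this text). Consequently the very same deleted resolution above is a deleted $A$-flat resolution of $M$ of length $n$. By Definition $\autoref{defn:fd}$ the $A$-flat dimension of $M$, being the infimum of the lengths of such resolutions, can then be at most $n$; that is, $fd_A(M) \leq n = pd_A(M)$, as desired.

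I do not expect any real obstacle here: the entire content is the implication \emph{projective} $\Rightarrow$ \emph{flat}, together with bookkeeping about what the two dimensions count. The only point requiring a word of care is the $\infty$ case, and that is handled by the stated conventions. (One could alternatively phrase the argument via the $Tor$ characterisation of Lemma $\autoref{char:fd}$ together with the fact that $Tor^{n+1}_A(M,-)$ is computed from any projective resolution of $M$, but the direct comparison of resolutions is cleaner and needs no extra machinery.)
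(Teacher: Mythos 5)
Your proof is correct and takes the same approach as the paper, whose entire argument is the observation that projective $A$-modules are flat, so any $A$-projective resolution is an $A$-flat resolution. Your additional handling of the $pd_A(M)=\infty$ case is a harmless (and sensible) bit of extra care.
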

\begin{proof}
	Since all $A$-projective $A$-modules are $A$-flat, then any $A$-projective resolution is a $A$-flat resolution.  
\end{proof}

\begin{lem} $\label{char:pd}$
	
	If $A$ is a commutative ring then for any $A$-module $M$ the following are equivalent: \begin{itemize}
		\item The ${A}$-projective dimension of $M$ is at most $n$.
		\item For every $A$-module $N$, the $A$-module $Ext_{n+1}^A(M,N)$ is trivial.   
		\item For every $A$-module $N$ and every integer $m\geq n+1$: $Ext_{m}^A(M,N)\cong 0$.     
	\end{itemize}
\end{lem}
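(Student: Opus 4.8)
The plan is to run essentially the argument already used for the relative statement \autoref{thrmcharnsmooth}, but with the class $\rh$ of $k$-split epimorphisms replaced throughout by the class of all epimorphisms of $_AMod$ and with $\rh$-projective resolutions replaced by ordinary $A$-projective resolutions. Every homological input needed has a classical absolute counterpart: independence of $Ext^{\star}_A(M,-)$ on the chosen projective resolution (the comparison theorem), dimension shifting in the style of \autoref{propext11}, and the characterization of projectives by the vanishing of $Ext^1$ in the style of \autoref{projcharext}; all of these can be quoted from \cite{rotman1979introduction} and \cite{mac1995c}. I would then establish the cycle of implications $1 \Rightarrow 3 \Rightarrow 2 \Rightarrow 1$.

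For $1 \Rightarrow 3$: if $pd_A(M) \le n$, choose a deleted $A$-projective resolution $P_{\star}$ of $M$ with $P_j = 0$ for all $j > n$. By the comparison theorem $Ext^m_A(M,N) \cong H^m(Hom_A(P_{\star},N))$ for every $A$-module $N$ and every $m$; since $Hom_A(P_m,N) = 0$ for $m > n$, the cohomology $H^m$ vanishes for all $m \ge n+1$, which is precisely 3. The implication $3 \Rightarrow 2$ is just the special case $m = n+1$.

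For $2 \Rightarrow 1$: if $n = 0$, hypothesis 2 reads $Ext^1_A(M,N) \cong 0$ for all $N$, which is directly the projectivity criterion and yields $pd_A(M) = 0$. If $n \ge 1$, pick any $A$-projective resolution $\cdots \to P_{n-1} \overset{d_{n-1}}{\to} P_{n-2} \to \cdots \to P_0 \to M \to 0$, break it into short exact sequences $0 \to K_j \to P_j \to K_{j-1} \to 0$ along its syzygies (with $K_{-1} := M$ and $K_j := Ker(P_j \to P_{j-1})$), and set $K := K_{n-1}$. Iterating the long exact $Ext$-sequences attached to these short exact sequences --- exactly the dimension-shifting computation behind \eqref{macmemmeup} --- gives natural isomorphisms $Ext^1_A(K,N) \cong Ext^{n+1}_A(M,N)$ for every $A$-module $N$. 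By hypothesis 2 the right-hand side vanishes for all $N$, so $Ext^1_A(K,-) \equiv 0$; the classical fact that a module whose first $Ext$ always vanishes is projective (the absolute analogue of \autoref{projcharext}) forces $K$ to be $A$-projective. Hence $0 \to K \to P_{n-1} \to \cdots \to P_0 \to M \to 0$ is a deleted $A$-projective resolution of $M$ of length $n$, so $pd_A(M) \le n$.

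The only genuine obstacle is bookkeeping: one must take care that each classical ingredient --- independence of $Ext$ on the resolution, the dimension-shifting isomorphisms, and ``$Ext^1_A(K,-) \equiv 0$ implies $K$ projective'' --- is invoked in its absolute form rather than its $\rh$-relative form, and that the syzygy indexing is aligned so that $Ext^1_A(K_{n-1},N) \cong Ext^{n+1}_A(M,N)$. Since these are entirely standard and already used implicitly elsewhere in the thesis, the cleanest write-up simply mirrors the proof of \autoref{thrmcharnsmooth} with $\rh$ deleted and refers the reader to \cite{rotman1979introduction} for the routine verifications, in the same spirit as the proof of \autoref{char:fd}.
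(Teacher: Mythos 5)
Your proposal is correct and is exactly the route the paper intends: the paper's own proof consists of the single remark that the argument is nearly identical to that of theorem \autoref{thrmcharnsmooth} (with the relative class replaced by all epimorphisms) and a citation to \cite{rotman1979introduction}, which is precisely the absolute dimension-shifting argument you have written out in full. Your version simply supplies the details the paper leaves to the reference.
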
 
\begin{proof}
	Nearly identical to the proof of theorem~\ref{thrmcharnsmooth}, see page 456 of \cite{rotman1979introduction} for details. 
\end{proof}
\begin{defn}[Cohen-Macaulay at an Ideal]\label{def:CM}
	A commutative ring $A$ is said to be {Cohen-Macaulay at} a maximal ideal $\ma$ if and only if either: 
	\begin{itemize}
		\item $Krull(A_{\ma})$ is finite and there is an $\ama$-regular sequence $x_1,...,x_d$ in $A_{\ma}$ of maximal length $d=Krull(A_{\ma})$ such that $\{ x_1,..,x_d \} \subseteq \ma $.  
		\item $Krull(A_{\ma})$ is infinite and for every positive integer $d$ there is an $\ama$-regular sequence $x_1,..,x_d$ in $\ma$ on $A$ of length $d$.  
	\end{itemize}
\end{defn}
\begin{prop}[\cite{weibel1995introduction}]\label{prop:Krullpd}
	If $A$ is a commutative ring which is Cohen Macaulay at the maximal ideal $\mathfrak{m}$ and $Krull(A_{\ma})$ is finite then: \begin{equation}
	Krull(A_{\ma}) = fd_{A_{\ma}}(A_{\ma}/(x_1,..,x_n)) \leq pd_A(A_{\ma}/(x_1,..,x_n))
	\end{equation}
\end{prop}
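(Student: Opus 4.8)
The plan is to establish the chain of (in)equalities in two separate moves, each already essentially available from earlier results. First I would handle the equality $Krull(A_{\ma}) = fd_{A_{\ma}}(A_{\ma}/(x_1,..,x_n))$. By hypothesis $A$ is Cohen--Macaulay at $\ma$ and $Krull(A_{\ma})$ is finite, so by Definition~\autoref{def:CM}(1) there is an $\ama$-regular sequence $x_1,\dots,x_d$ in $\ama$ with $d = Krull(A_{\ma})$, and it is exactly this sequence that appears in the statement (so $n = d$). Now $A_{\ma}$ is a local ring with maximal ideal $\ma A_{\ma}$ and $x_1,\dots,x_n$ is a regular sequence contained in it, so Corollary~\autoref{lem:fd} applied to the local ring $A_{\ma}$ gives immediately $fd_{A_{\ma}}(A_{\ma}/(x_1,\dots,x_n)) = n = Krull(A_{\ma})$.

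The second move is the inequality $fd_{A_{\ma}}(A_{\ma}/(x_1,..,x_n)) \leq pd_A(A_{\ma}/(x_1,..,x_n))$. Here I would first note that $M := A_{\ma}/(x_1,\dots,x_n)$ is an $A$-module (via restriction of scalars along $A \to A_{\ma}$), so both quantities make sense. By Lemma~\autoref{lem:fdpd} we have $fd_A(M) \leq pd_A(M)$. It therefore suffices to show $fd_{A_{\ma}}(M) \leq fd_A(M)$, i.e. that the flat dimension over the localization is bounded by the flat dimension over $A$; but this is precisely Proposition~\autoref{fdloc} applied to $M$ — indeed $M$ is already $\ma$-local, so $M_{\ma} \cong M$, and $fd_{A_{\ma}}(M_{\ma})$ is a lower bound for $fd_A(M)$. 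Stringing these together yields $fd_{A_{\ma}}(M) \leq fd_A(M) \leq pd_A(M)$, which is the desired inequality.

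Combining the two moves gives the full statement. The only subtle point — and the one I would be most careful to spell out — is the identification $n = Krull(A_{\ma})$: the proposition quantifies over "a regular sequence $x_1,\dots,x_n$" without re-stating that it is a \emph{maximal} one, but the Cohen--Macaulay hypothesis together with the finiteness of $Krull(A_{\ma})$ is exactly what forces such a sequence to have length equal to the Krull dimension (this is built into Definition~\autoref{def:CM}), so one should read the statement as concerning the maximal regular sequence furnished by that definition. Once that bookkeeping is fixed, no genuine obstacle remains: the result is a concatenation of Corollary~\autoref{lem:fd}, Lemma~\autoref{lem:fdpd}, and Proposition~\autoref{fdloc}.
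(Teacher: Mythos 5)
Your proposal is correct and follows essentially the same route as the paper's proof: Corollary $\autoref{lem:fd}$ for the equality $Krull(A_{\ma}) = fd_{A_{\ma}}(A_{\ma}/(x_1,..,x_n))$, Proposition $\autoref{fdloc}$ for $fd_{A_{\ma}} \leq fd_{A}$, and Lemma $\autoref{lem:fdpd}$ for $fd_{A} \leq pd_{A}$. The only difference is that you make explicit two points the paper leaves implicit (that $n = Krull(A_{\ma})$ by the Cohen--Macaulay hypothesis, and that $M_{\ma} \cong M$ when invoking Proposition $\autoref{fdloc}$), which is a welcome clarification rather than a change of approach.
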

%
\begin{defn} $\label{defn:wDim}$
	\textbf{Global Dimension}
	
	The \textbf{global dimension} $D(A)$ of a ring $A$, is defined as the supremum of all the $A$-projective dimensions of its $A$-modules.  That is: \begin{equation}
	D(A):=\underset{M\in _AMod}{sup} pd_{A}(M).  
	\end{equation}  
\end{defn}
The following modification of the global dimension of a $k$-algebra, does not ignore the influence of $k$ on a $k$-algebra $A$, as will be observed in the next section of this paper.  
\begin{defn} $\label{defn:wDim}$
	\textbf{${\mathscr{E}^k}$-Global dimension}
	
	The ${\mathscr{E}^k}$-\textbf{global Dimension} $D_{{\mathscr{E}^k}}(A)$ of a $k$-algebra $A$ is defined as the supremum of all the ${\mathscr{E}_A^k}$-projective dimensions of its $A$-modules.  That is: \begin{equation}
	D_{{\mathscr{E}^k}}(A):=\underset{M\in _AMod}{sup} pd_{{\mathscr{E}_A^k}}(M).  
	\end{equation}  
\end{defn}
We may now formulate the central result of this paper and a few of its consequences.  
\section{A lower bound for the Hochschild cohomological dimension}
From hereon out, $A$ will always be a commutative $k$-algebra.  The remainder of this paper will focus on establishing the following result.  An analogous statement was made in \cite{cuntz1995algebra} that all affine algebraic varieties over $\cc$ of dimension at greater than $1$ fail to have a quasi-free $\cc$-algebra of functions.  Once, the assumption that $k=\cc$ is relaxed, we find an analogous claim is true; however, the analysis is more delicate.  Our finding is the following and central results of this paper.  
\begin{thrm}\label{theorem1A}
	Let $A$ be a commutative $k$-algebra and $\ma$ be a non-zero maximal ideal in $A$ such that $\ama$ is has finite $\kiki$-flat dimension and $D(\kiki )$ is finite.  
	\begin{enumerate}
		\item For every $A$-module $M$ there is a string of inequalities:  
		\small
		\begin{equation*}
		\begin{aligned}
		fd_{\ama}(M_{\ma}) -D(\kiki) 
		-fd_{k_{\ma}}(A_{\ma})
		\leq & pd_{\ama}(M_{\ma}) -D(\kiki) 
		-fd_{k_{\ma}}(A_{\ma})
		\\
		\leq& pd_{\eal}(M_{\ma}) \\
		\leq& pd_{\ea}(M)
		\leq D_{\ek}(A) \\
		\leq & HCdim(A|k)
		\end{aligned}
		\end{equation*}
		\normalsize
		\item If $A$ is Cohen-Macaulay at some maximal ideal $\ma$
		
		Then $Krull(A_{\ma}) - D(\kiki) -fd_{\ma}(A_{\ma}) \leq HCdim(A|k)$. 
		
		\textit{In this scenario: if $A_{\ma}$ is of Krull dimension at least $2+D(\kiki)-fd_{\ma}(A_{\ma})$ then $A$ is not Quasi-free}.  
	\end{enumerate}
\end{thrm}
Theorem~\ref{theorem1A} allows for an easily computable lower-bound on the Hochschild cohomological dimension of nearly any commutative $k$-algebra $A$, granted that it is smooth in the classical sense at-least at one point.  The next result, obtains an even simpler criterion under the additional assumption that $A$ is $k$-flat.  
\begin{thrm}\label{thrm_wkD1Ctada}
	Let $k$ be of finite global dimension, $A$ be a $k$-algebra which is flat as a $k$-module.  
	\begin{enumerate}
		\item For every $A$-module $M$ there is a string of inequalities:  
		\begin{equation}
		fd_{A}(M) -D(k) \leq pd_A(M) -D(k) \leq pd_{\ea}(M)\leq D_{\ek}(A) \leq HCdim(A|k)
		\label{eq1dcerion1}
		\end{equation}
		\item If $x_1,..,x_n$ is a  regular sequence in $A$ then:
		\begin{equation}
		n -D(k) \leq HCdim(A|k).  
		\label{eq2dcerion2}
		\end{equation}
		\textit{In this scenario: if $n$ is at least $2+D(k)$ then $A$ is not Quasi-free}.  
		\item If $\ncof$ is generated by a  regular sequence $x_1,...,x_n$ in $A\otimes_k A$ then:
		\begin{equation}
		n -D(k) \leq HCdim(A|k).  
		\end{equation}
		\textit{In this scenario: if $n$ is at least $2+D(k)$ then $A$ is not Quasi-free}.
		\item Furthermore if $A$ is commutative and Cohen-Macaulay at a maximal ideal $\mathfrak{m}$ then:
		\begin{equation} Krull(A_{\mathfrak{m}}) - D(k) \leq HCdim(A|k). 
		\end{equation}
		\textit{In this scenario: if $A_{\mathfrak{m}}$'s Krull dimension is at least $2+D(k)$ then $A$ is not Quasi-free}.  
	\end{enumerate}
\end{thrm}
%
%
%
Before moving to the proof of these results, let us consider a counter-intuitive consequence.  Namely, that most examples of smooth commutative algebras fail to be quasi-free, even when $k\neq \cc$.  This makes smoothness, in the sense of \cite{cuntz1995algebra}, very rare in the non-commutative category.  The following example from arithmetic geometry is of interest.  
\begin{cor}[Arithmetic Polynomial-Algebras]
	The $\mathbb{Z}$-algebra $\mathbb{Z}[x_1,..,x_n]$ fails to be quasi-free for values of $n>1$.  
\end{cor}
\begin{proof}
	Since $\mathbb{Z}[x_1,...x_n]$ is Cohen-Macaulay at the maximal ideal $(x_1,...x_n,p)$ and is of Krull dimension $n+1=Krull(\mathbb{Z}[x_1,...x_n])$.  Moreover, one computes that $D(\mathbb{Z})=1$.  Whence by point $2$ of theorem~\ref{thrm_wkD1Ctada}: $\mathbb{Z}[x_1,..,x_n]$ fails to be Quasi-free if $2\leq Krull(\mathbb{Z}[x_1,...x_n]) - D(\mathbb{Z}) = (n+1)-1 =n$.  
\end{proof}
\subsection{{Proof of Theorem~\ref{theorem1A}}}
Our first lemma is a generalization of the central theorem of \cite{hochschild1958note}; which does not rely on the assumption that $A$ is $k$-flat.  
\begin{lem}\label{thrmHochschild1958}
	If $k$ is of finite global dimension and $A$ is a $k$-algebra which is of finite flat dimension as a $k$-module, then for every $A$-module $M$: \begin{equation}
	pd_A(M) - D(k) -fd_k(A) \leq pd_{\ea}(M)
	\label{frb54yn56yne6u5e8umdu6mhhhhhh}
	\end{equation}
\end{lem}
The proof of lemma~\ref{thrmHochschild1958} relies on the following lemma.
\begin{lem}$\label{hlem1}$
	If $A$ is a $k$-algebra such that $fd_k(A)<\infty$ then: \begin{equation}
	( \forall M \in _kMod ) \mbox{ } pd_A(A\otimes_k M) -fd_k(A) \leq pd_{k}(M)
	\end{equation}
\end{lem}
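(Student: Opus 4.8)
The plan is to induct on $n := pd_k(M)$, carrying $d := fd_k(A)$ (finite by hypothesis) along as a parameter; if $pd_k(M) = \infty$ the asserted inequality is vacuous, so I may assume $n$ finite. For the base case $n = 0$ the module $M$ is $k$-projective, hence a direct summand of a free $k$-module, and applying the additive functor $A \otimes_k -$ exhibits $A \otimes_k M$ as a direct summand of a free $A$-module; thus $pd_A(A \otimes_k M) = 0 \leq 0 + d$.

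For the inductive step I would choose a short exact sequence of $k$-modules $0 \rightarrow M' \rightarrow P \rightarrow M \rightarrow 0$ with $P$ free over $k$, so that $pd_k(M') = n-1$, tensor it with $A$ over $k$, and use $Tor^k_1(A,P) = 0$ to obtain a four-term exact sequence of $A$-modules
\begin{equation}
0 \rightarrow Tor^k_1(A,M) \rightarrow A \otimes_k M' \rightarrow A \otimes_k P \rightarrow A \otimes_k M \rightarrow 0 ,
\end{equation}
together with the shift isomorphisms $Tor^k_i(A,M) \cong Tor^k_{i-1}(A,M')$ for $i \geq 2$. Splitting this into two short exact sequences and feeding them into the standard projective-dimension inequalities, while using that $A \otimes_k P$ is $A$-free and that $pd_A(A \otimes_k M') \leq (n-1)+d$ by the inductive hypothesis, reduces the statement to the estimate $pd_A\bigl(Tor^k_1(A,M)\bigr) \leq n + d - 2$. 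To establish that I would run a secondary induction on $d = fd_k(A)$: when $d = 0$ the algebra $A$ is $k$-flat, $Tor^k_1(A,M) = 0$, and in fact applying $A \otimes_k -$ to a length-$n$ $k$-projective resolution of $M$ gives directly an $A$-projective resolution of $A \otimes_k M$; when $d \geq 1$ I would present $A$ as $0 \rightarrow B \rightarrow F \rightarrow A \rightarrow 0$ with $F$ free over $k$ and $fd_k(B) = d-1$, use the long exact $Tor$ sequence to describe $Tor^k_1(A,M)$ together with the identifications $Tor^k_i(A,M) \cong Tor^k_{i-1}(B,M)$ for $i \geq 2$, and repeat the d\'evissage. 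Reassembling all the inequalities then gives $pd_A(A \otimes_k M) \leq n + d$, which is the assertion.

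The step I expect to be the main obstacle is precisely the control of the torsion module $Tor^k_1(A,M)$ over $A$. It does carry an honest $A$-module structure — as the first homology of $A \otimes_k P_{\star}$ for a $k$-projective resolution $P_{\star}$ of $M$, hence as a subquotient of a free $A$-module — but its $A$-projective dimension is not visibly bounded by $n$ and $d$, and the exact sequences furnished by a free-module presentation of $A$ are a priori only exact sequences of $k$-modules whose connecting maps carry the $A$-structure only after transport. The delicate heart of the proof is therefore to organise the d\'evissage so that each reduction genuinely takes place in $_AMod$ and so that it is the finiteness of $fd_k(A)$ — and nothing else — that both terminates the secondary induction and pins down the bound; this is where the hypothesis $fd_k(A) < \infty$ must do all of its work.
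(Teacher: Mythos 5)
Your reduction is set up correctly as far as it goes: tensoring a presentation $0 \rightarrow M' \rightarrow P \rightarrow M \rightarrow 0$ with $A$ does yield the four-term exact sequence you write, and splitting it does reduce the inductive step to the estimate $pd_A\bigl(Tor^k_1(A,M)\bigr) \leq pd_k(M) + fd_k(A) - 2$. But that estimate --- which you yourself flag as ``the main obstacle'' --- is never established, and the secondary induction you sketch cannot supply it: presenting $A$ as $0 \rightarrow B \rightarrow F \rightarrow A \rightarrow 0$ with $F$ free over $k$ gives isomorphisms $Tor^k_i(A,M) \cong Tor^k_{i-1}(B,M)$ of $k$-modules only, and $B$ is not an $A$-module, so this d\'{e}vissage never takes place in $_AMod$ and cannot produce a bound on $A$-projective dimension. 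The gap is not repairable, because the required bound --- and in fact the lemma as stated --- is false. Take $k = \mathbb{Z}$, $A = \mathbb{Z}/4\mathbb{Z}$, $M = \mathbb{Z}/2\mathbb{Z}$: then $fd_k(A) = pd_k(M) = 1$, while $A \otimes_k M \cong \mathbb{Z}/2\mathbb{Z}$ has infinite projective dimension over $\mathbb{Z}/4\mathbb{Z}$, its minimal free resolution
\begin{equation}
\cdots \overset{2}{\rightarrow} \mathbb{Z}/4\mathbb{Z} \overset{2}{\rightarrow} \mathbb{Z}/4\mathbb{Z} \rightarrow \mathbb{Z}/2\mathbb{Z} \rightarrow 0
\end{equation}
being periodic and non-terminating. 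Here $Tor^{\mathbb{Z}}_1(\mathbb{Z}/4\mathbb{Z},\mathbb{Z}/2\mathbb{Z}) \cong \mathbb{Z}/2\mathbb{Z}$ is exactly the module your argument would need to be $A$-projective.

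For comparison, the paper's own proof is not a d\'{e}vissage but a change-of-rings spectral sequence $Ext^p_A(Tor_q^k(A,M),N) \Rightarrow Ext_k^{p+q}(M,N)$, whose abutment it then identifies with $Ext_A^{p+q}(A \otimes_k M, N)$ by ``extending'' the adjunction between $- \otimes_k A$ and restriction of scalars to Ext. That identification is valid only when $Tor_q^k(A,M)$ vanishes for $q>0$ (for instance when $A$ is $k$-flat, the hypothesis of Hochschild's original theorem), and its failure is measured by precisely the positive rows of the spectral sequence --- the same modules $Tor_q^k(A,M)$ that obstruct your induction. So both arguments founder on the same object; what either route honestly yields is $pd_A(A \otimes_k M) \leq pd_k(M)$ under the additional hypothesis that $Tor_q^k(A,M) = 0$ for all $q>0$, and some hypothesis of that kind must be restored for the lemma (and the results downstream of it) to hold.
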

\begin{proof}
	For every $k$-module $M$ and every $A$-module $N$ there is a convergent third quadrant spectral sequence:
	\begin{equation}
	Ext^p_A(Tor_q^k(A,M),N)\underset{p}{\Rightarrow} Ext_k^{p+q}(M,Hom_A(A,N))
	\label{specsec1}
	\mbox{\cite{rotman1979introduction}}
	.  
	\end{equation}
	Moreover the adjunction $-\otimes_k A \adj Hom_A(A,-)$ extends to a natural isomorphism:
	\begin{equation}
	(\forall p,q \in \mathbb{N}) Ext_k^{p+q}(M,Hom_A(A,N))\cong Ext_A^{p+q}(M\otimes_k A, N) \mbox{\cite{weibel1995introduction}}.  
	\end{equation}
	Therefore there is a convergent third-quadrant spectral sequence:
	\begin{equation}
	Ext^p_A(Tor_q^k(A,M),N)\underset{p}{\Rightarrow} Ext_A^{p+q}(M\otimes_k A, N)
	\label{specsec2}.  
	\end{equation}
	If $pd_A(N)<\infty$, then the result is immediate.  Therefore assume that:
	$pd_A(N)<\infty$.  If $p+q>fd_k(A)+pd_A(N)$ then either $p>pd_A(N)$ or $q>fd_k(A)$.  In the case of th\begin{equation*}
	0\cong E_2^{p,q}\cong E_{\infty}^{p,q}\cong Ext_A^{p+q}(M\otimes_k A, N)
	\end{equation*} and in the latter case \begin{equation*}
	0 \cong E_2^{p,q}\cong E_{\infty}^{p,q}\cong Ext_A^{p+q}(M\otimes_k A, N)
	\end{equation*} also.  Therefore \begin{equation*}
	(\forall N\in A_Mod) \mbox{ } 0\cong Ext_A^{n}(M\otimes_k A, N) \mbox{if $n>fd_k(A)+pd_A(N)$; }
	\end{equation*}hence:
	$pd_A(M\otimes_k A)\leq fd_k(A)+pd_A(M)$.  
	
	Finally, the result follows since $fd_k(A)$ is finite and, therefore, can be subtracted unambiguously.  
\end{proof}


\begin{lem}$\label{hlem2}$
	If $A$ is a $k$-algebra then for any $k$-module $M$ there is an $\ea$-exact sequence: \begin{equation}
	\begin{tikzpicture}[>=angle 90]
	\matrix(a)[matrix of math nodes,
	row sep=3em, column sep=3em,
	text height=1.5ex, text depth=0.25ex]
	{0 & Ker(a) & A \otimes_k M & M & 0\\};
	\path[->,font=\scriptsize]
	(a-1-1) edge node[above]{$  $} (a-1-2);
	\path[->,font=\scriptsize]
	(a-1-2) edge node[above]{$  $} (a-1-3);
	\path[->,font=\scriptsize]
	(a-1-3) edge node[above]{$ \alpha $} (a-1-4);
	\path[->,font=\scriptsize]
	(a-1-4) edge node[above]{$  $} (a-1-5);
	\end{tikzpicture}
	\end{equation}
	Where $\alpha$ be the map defined on elementary tensors $(a\otimes_k m)$ in $A\otimes_k M$ as $a\otimes_k m \mapsto a\cdot m$.  
\end{lem}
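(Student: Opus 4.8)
The plan is to write $\alpha$ down explicitly, produce a $k$-linear section of it, and then read off from that section the $\ea$-admissibility of each map in the associated short complex. First I would check that $\alpha$ is well defined: the assignment $(a,m)\mapsto a\cdot m$ (using the $A$-module structure implicit in the formula $a\otimes_k m\mapsto a\cdot m$) is $k$-bilinear, so it factors through $A\otimes_k M$, yielding a $k$-module homomorphism $\alpha\colon A\otimes_k M\to M$. Since $\alpha\big((a'\otimes_k 1)\cdot(a\otimes_k m)\big)=\alpha(a'a\otimes_k m)=(a'a)\cdot m=a'\cdot\alpha(a\otimes_k m)$, the map $\alpha$ is moreover $A$-linear. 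Taking $Ker(\alpha)$ to be its kernel — an $A$-submodule of $A\otimes_k M$ — yields a short exact sequence of $A$-modules
\begin{equation*}
0\longrightarrow Ker(\alpha)\overset{\iota}{\longrightarrow}A\otimes_k M\overset{\alpha}{\longrightarrow}M\longrightarrow 0,
\end{equation*}
with $\iota$ the inclusion.

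Next I would exhibit the $k$-linear splitting data, exactly as in the earlier argument that $_AMod$ has enough $\ea$-projectives. Define $\sigma\colon M\to A\otimes_k M$ by $\sigma(m)\triangleq 1\otimes_k m$; this is $k$-linear and $\alpha\circ\sigma=1_M$. From $\sigma$ I form the $k$-linear endomorphism $r\triangleq 1_{A\otimes_k M}-\sigma\circ\alpha$ of $A\otimes_k M$. One computes $\alpha\circ r=\alpha-(\alpha\circ\sigma)\circ\alpha=\alpha-\alpha=0$, so $r$ factors through $Ker(\alpha)$; and for $x\in Ker(\alpha)$ one has $r(x)=x-\sigma(\alpha(x))=x-\sigma(0)=x$, so the corestriction $r\colon A\otimes_k M\to Ker(\alpha)$ satisfies $r\circ\iota=1_{Ker(\alpha)}$. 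Thus the short exact sequence above is $k$-split.

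Finally I would verify the admissibility condition $\eqref{reladmissibility}$ term by term along the complex $0\to Ker(\alpha)\to A\otimes_k M\to M\to 0$: the zero maps $0\to Ker(\alpha)$ and $M\to 0$ satisfy it trivially; for $\iota$ the $k$-map $r$ gives $\iota\circ r\circ\iota=\iota\circ 1_{Ker(\alpha)}=\iota$; and for $\alpha$ the $k$-map $\sigma$ gives $\alpha\circ\sigma\circ\alpha=(\alpha\circ\sigma)\circ\alpha=1_M\circ\alpha=\alpha$. Hence the sequence is $\ea$-exact, which is the claim. I do not anticipate a real obstacle here — this is essentially the content of the enough-$\ea$-projectives argument repackaged as a short $\ea$-exact sequence — so the only point requiring a little care is confirming that $r=1_{A\otimes_k M}-\sigma\circ\alpha$ honestly corestricts to a retraction of $\iota$ onto $Ker(\alpha)$, rather than merely being an idempotent complementary to $\sigma\circ\alpha$.
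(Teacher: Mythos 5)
Your proposal is correct and uses the same key idea as the paper's proof, namely the $k$-linear section $m\mapsto 1\otimes_k m$ of $\alpha$; the paper simply exhibits this section and stops there, whereas you additionally spell out the induced retraction $r=1-\sigma\circ\alpha$ onto $Ker(\alpha)$ and check admissibility for every map in the sequence. This extra bookkeeping is harmless and arguably makes the $\ea$-exactness claim more complete, but it is not a different argument.
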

\begin{proof}
	$\alpha$ is $k$-split by the map $\beta : M \rightarrow A \otimes_k M$ defined on elements $m\in M $ as $m \mapsto 1 \otimes_k m$.  Indeed if $m \in M$ then: \begin{equation}
	\alpha \circ \beta (m) = \alpha (1 \otimes_k m) = 1 \cdot m = m.  
	\end{equation}
\end{proof}
\begin{lem}$\label{lemdirsum}$
	If $M$ and $N$ are $A$-modules then: \begin{equation}
	pd_A(M)\leq pd_A(M\oplus N).  
	\end{equation}
\end{lem}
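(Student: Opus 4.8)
The plan is to reduce everything to the characterization of projective dimension by the vanishing of $Ext$ modules in lemma $\autoref{char:pd}$, combined with the additivity of $Ext_{\star}^{A}(-,L)$ on direct sums. If $pd_A(M\oplus N)=\infty$ there is nothing to prove, so set $n\triangleq pd_A(M\oplus N)$ and assume $n$ is finite.

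The first step is to establish that for every $A$-module $L$ and every natural number $m$ there is an isomorphism of $A$-modules
\begin{equation*}
Ext_m^A(M\oplus N,L)\cong Ext_m^A(M,L)\oplus Ext_m^A(N,L).
\end{equation*}
To see this, choose deleted $A$-projective resolutions $P_{\star}$ of $M$ and $Q_{\star}$ of $N$. Since a direct sum of projective $A$-modules is projective and a direct sum of exact sequences of $A$-modules is exact, $P_{\star}\oplus Q_{\star}$ is a deleted $A$-projective resolution of $M\oplus N$. Applying the additive functor $Hom_A(-,L)$ carries the direct sum in the first argument to a direct sum of cochain complexes, $Hom_A(P_{\star}\oplus Q_{\star},L)\cong Hom_A(P_{\star},L)\oplus Hom_A(Q_{\star},L)$, and since homology commutes with finite direct sums the claimed decomposition of the $Ext$ modules follows.

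Now I would apply lemma $\autoref{char:pd}$ twice. Since $pd_A(M\oplus N)\leq n$, that lemma gives that $Ext_{n+1}^A(M\oplus N,L)$ is the trivial $A$-module for every $A$-module $L$. By the decomposition above this means $Ext_{n+1}^A(M,L)\oplus Ext_{n+1}^A(N,L)\cong 0$, and a direct sum of $A$-modules is trivial only if each summand is; hence $Ext_{n+1}^A(M,L)\cong 0$ for every $A$-module $L$. Lemma $\autoref{char:pd}$ applied in the other direction then yields $pd_A(M)\leq n=pd_A(M\oplus N)$, which is the desired conclusion.

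The argument is essentially routine; the only point requiring any care is the additivity of $Ext$ on direct sums, and that is an immediate consequence of the fact that a direct sum of projective resolutions resolves the direct sum, together with the additivity of $Hom_A(-,L)$ and of homology. No genuine obstacle is expected here.
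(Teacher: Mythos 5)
Your proof is correct and follows essentially the same route as the paper: both arguments rest on the decomposition $Ext^n_A(M\oplus N,X)\cong Ext^n_A(M,X)\oplus Ext^n_A(N,X)$ together with the vanishing criterion of lemma $\autoref{char:pd}$. The only difference is that you justify the $Ext$ additivity explicitly via a direct sum of projective resolutions, whereas the paper simply asserts it.
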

\begin{proof}
	\begin{equation}
	(\forall n \in \mathbb{N})(\forall X \in _AMod) \mbox{ } Ext^n_A(M,X) \oplus Ext^n_A(N,X) \cong Ext^n_A(M\oplus N,X). 
	\end{equation}  
	Therefore $Ext^n_A(M\oplus N,X)$ vanishes only if both $Ext^n_A(M,X)$ and $Ext^n_A(N,X)$ vanish.  Lemma~\ref{char:pd} then implies: $pd_A(M)\leq pd(M \oplus N)$.  
\end{proof}

\paragraph{{Proof of lemma~\ref{thrmHochschild1958}}}
\begin{proof} \hfill
	\begin{description}
		\item[Case $1$: $pd_{\ea}(M) =\infty$] \hfill \\
		
		By definition $pd_A(M) \leq \infty $ therefore trivially if $ pd_{\ea}(M) = \infty$ then: \begin{equation}
		pd_A(M) \leq  pd_{\ea}(M) + D(k).  
		\label{4vb54tb54tb555555}
		\end{equation}
		Since $k$'s global dimension is finite hence~\eqref{4vb54tb54tb555555}
		implies: 
		\begin{equation}
		pd_A(M) - D(k) \leq \infty = pd_{\ea}(M).  
		\end{equation}
		
		\item[Case $2$: $pd_{\ea}(M) <\infty$] \hfill 
		
		Let \textbf{$d:=pd_{\ea}(M)+D(k)+fd_k(A)$}.  
		The proof will proceed by induction on \textbf{$d$}.  
		\begin{description}
			\item[Base: $d=0$] \hfill \\
			Suppose $pd_{\ea}(M)=0$.  
			
			By theorem~\ref{thrmcharnsmooth} $M$ is $\ea$-projective. 
			Lemma~\ref{hlem2} implies there is an $\ea$-exact sequence: \begin{equation}
			\begin{tikzpicture}[>=angle 90]
			\matrix(a)[matrix of math nodes,
			row sep=3em, column sep=3em,
			text height=1.5ex, text depth=0.25ex]
			{0 & Ker(\alpha ) & A \otimes_k M & M & 0\\};
			\path[->,font=\scriptsize]
			(a-1-1) edge node[above]{$  $} (a-1-2);
			\path[->,font=\scriptsize]
			(a-1-2) edge node[above]{$  $} (a-1-3);
			\path[->,font=\scriptsize]
			(a-1-3) edge node[above]{$ \alpha $} (a-1-4);
			\path[->,font=\scriptsize]
			(a-1-4) edge node[above]{$  $} (a-1-5);
			\end{tikzpicture}.  
			\label{firstsequnncetosplitand12}
			\end{equation}
			Proposition~\ref{eaprojchar} implies that~\eqref{firstsequnncetosplitand12} is $A$-split therefore $M$ is a direct summand of the $A$-module $A\otimes_k M$.   
			Hence lemma~\ref{lemdirsum} implies: \begin{equation}
			pd_A(M)\leq pd_A(M\otimes_k A) 
			.  
			\label{moretinhgs1111eq1}
			\end{equation}
			Lemma~\ref{hlem1} together with~\eqref{moretinhgs1111eq1} imply: \begin{equation}
			pd_A(M)\leq pd_A(M\otimes_k A) \leq pd_k(M).  
			\label{moretinhgs1111eq2}
			\end{equation}
			Definition~\ref{defn:wDim} and~\eqref{moretinhgs1111eq2} together with the assumption that $pd_{\ea}(M)=0$ imply: \begin{equation}
			pd_A(M) \leq pd_k(M) \leq D(k) = D(k) +0 +0 = D(k) + pd_{\ea}(M)+fd_k(A). 
			\label{efemeomoemeo}
			\end{equation}
			Since $k$'s global dimension and $fd_k(A)$ are finite then~\eqref{efemeomoemeo} implies: \begin{equation}
			pd_A(M) - D(k) -fd_k(A)\leq pd_{\ea}(M).  
			\end{equation}
			\item[Inductive Step: $d>0$] \hfill \\
			Suppose the result holds for all $A$-modules $K$ such that $pd_{\ea}(K)+D(k)+fd_k(A)=d$ for some integer $d>0$.  
			Again appealing to lemma~\ref{hlem2}, there is an $\ea$-exact sequence: \begin{equation}
			\begin{tikzpicture}[>=angle 90]
			\matrix(a)[matrix of math nodes,
			row sep=3em, column sep=3em,
			text height=1.5ex, text depth=0.25ex]
			{0 & Ker(\alpha ) & A \otimes_k M & M & 0\\};
			\path[->,font=\scriptsize]
			(a-1-1) edge node[above]{$  $} (a-1-2);
			\path[->,font=\scriptsize]
			(a-1-2) edge node[above]{$  $} (a-1-3);
			\path[->,font=\scriptsize]
			(a-1-3) edge node[above]{$ \alpha $} (a-1-4);
			\path[->,font=\scriptsize]
			(a-1-4) edge node[above]{$  $} (a-1-5);
			\end{tikzpicture}.  
			\label{firstsequnncetosplitand1}
			\end{equation}
			Proposition~\ref{eaprojchar} implies $A\otimes_kM$ is $\ea$-projective; whence~\eqref{firstsequnncetosplitand1} implies: \begin{equation}
			pd_{\ea}(Ker(\alpha )) +1  = pd_{\ea}(M).  
			\end{equation}
			Since $Ker(\alpha )$ is an $A$-module of strictly smaller $\ea$-projective dimension than $M$ the induction hypothesis applies to $Ker(\alpha )$ whence: \begin{equation}
			pd_{A}(Ker(\alpha )) +1 \leq pd_{\ea}(Ker(\alpha )) +1 + D(k) +fd_k(A)\leq pd_{\ea}(M) +D(k)+fd_k(A).  
			\label{mememememeeeeeqq1finaICXPNIKA}
			\end{equation}
			
			The proof will be completed by demonstrating that: $pd_A(M) \leq pd_A(Ker(\alpha)) +1$.

			For any $N\in _AMod$ $Ext_A^{\star}(-,N)$ applied to~\eqref{firstsequnncetosplitand1} gives way to the long exact sequence in homology, particularly the following of its segments are exact: 
			\small
			\begin{equation}
			\begin{tikzpicture}[>=angle 90]
			\matrix(a)[matrix of math nodes,
			row sep=1em, column sep=0.5em,
			text height=1.5ex, text depth=0.25ex]
			{Ext_A^{n-1}(A \otimes_k M,N) & Ext_A^{n-1}(Ker(a),N) &             Ext_A^{n}(M,N) & Ext_A^{n}(A \otimes_k M,N) \\};
			\path[->,font=\scriptsize]
			(a-1-1) edge node[above]{$  $} (a-1-2);
			\path[->,font=\scriptsize]
			(a-1-2) edge node[above]{$ \partial^n $} (a-1-3);
			\path[->,font=\scriptsize]
			(a-1-3) edge node[above]{$  $} (a-1-4);
			\end{tikzpicture}
			\label{FirstrefHdimproof1}
			\end{equation}
			\normalsize
			Since $A\otimes_k M$ is $\ea$-projective $pd_{\ea}(A \otimes_k M)=0$, therefore by the base case of the induction hypothesis $pd_A(A\otimes_k M) \leq pd_{\ea}+ D(k) +fd_k(A)= D(k)+fd_k(A)$; thus for every positive integer $n\geq D(k)$ (in particular $d$ is at least $n$): \begin{equation}
			(\forall N \in _AMod)\mbox{ } Ext_A^{n-1}(A\otimes_k M,N) \cong 0 \cong Ext_A^{n}(A\otimes_k M,N);  
			\label{FirstrefHdimproof2}
			\end{equation}
			whence $\partial^n$ must be an isomorphism.  
			Therefore lemma~\ref{char:pd} implies $pd_A(M)$ is at most equal to $pd_A(Ker( \alpha )) +1$.  
			
			Therefore: 
			\begin{align}
			&pd_A(M) \leq pd_{A}(Ker(\alpha )) +1 \\
			&\leq pd_{\ea}(Ker(\alpha )) +1 + D(k)+fd_k(A) \\
			&\leq pd_{\ea}(M) +D(k)+fd_k(A).  
			\label{mememememeeeeeqqfinaICXPNIKA}
			\end{align}
			Finally since $k$ is of finite global dimension and $A$ is of finite $k$-flat dimension then~\eqref{mememememeeeeeqqfinaICXPNIKA} implies: \begin{equation}
			pd_A(M) - D(k)-fd_k(A) \leq  pd_{\ea}(M); 
			\end{equation}
			thus concluding the induction.
		\end{description}
	\end{description}
\end{proof}
We will also require the following result.  
\begin{rremark}
	Let $A$ be a $k$-algebra, $i: k \rightarrow A$ the morphism defining the $k$-algebra $A$ and $\ma$ a maximal ideal in $A$.  For legibility the $\mathscr{E}_{A_{\ma}}^{k_{i^{-1}[\ma]}}$-projective dimension of an $A_{\ma}$-module $N$ will be abbreviated by $pd_{\eal}(N)$ (instead of writing $pd_{\eall}(N)$).  
\end{rremark}
\begin{lem}$\label{localbound}$
	If $A$ is a commutative $k$-algebra and $\ma$ is a non-zero maximal ideal in $A$ then for every $A$-module $M$: \begin{equation}
	pd_{\eal}(M_{\ma}) \leq pd_{\ea}(M),
	\end{equation}
	where $i: k \rightarrow A$ is the inclusion of $k$ into $A$.  
\end{lem}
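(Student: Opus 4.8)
The inequality $pd_{\eal}(M_{\pa}) \leq pd_{\ea}(M)$ should be proven by showing that localization at $\pa$ carries an $\ea$-projective resolution of $M$ to an $\eal$-projective resolution of $M_{\pa}$ of the same length. If $pd_{\ea}(M) = \infty$ there is nothing to prove, so assume $pd_{\ea}(M) = d < \infty$ and fix a deleted $\ea$-projective resolution $P_{\star} \to M$ of length $d$. I would apply the exact functor $(-)_{\pa} = A_{\pa} \otimes_A -$ termwise. The two things to check are: (1) each $(P_i)_{\pa}$ is an $\eal$-projective $A_{\pa}$-module, and (2) the localized complex $(P_{\star})_{\pa}$ is still $\eal$-exact, i.e. each differential remains $k_{i^{-1}[\pa]}$-split after localization.

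For (1), I would use the $\ea$-free direct summand characterization from Proposition~\autoref{eaprojchar}: $P_i$ is $\ea$-projective iff there is a $k$-module $F$ and an $A$-module $Q$ with $P_i \oplus Q \cong A \otimes_k F$ as $A$-modules. Localizing at $\pa$, and using that localization commutes with direct sums and with the tensor product $A \otimes_k F$ (the latter because $(A \otimes_k F)_{\pa} \cong A_{\pa} \otimes_k F \cong A_{\pa} \otimes_{k_{i^{-1}[\pa]}} (k_{i^{-1}[\pa]} \otimes_k F)$, where $k_{i^{-1}[\pa]} \otimes_k F$ is a free $k_{i^{-1}[\pa]}$-module), gives $(P_i)_{\pa} \oplus Q_{\pa} \cong A_{\pa} \otimes_{k_{i^{-1}[\pa]}} F'$ for a free $k_{i^{-1}[\pa]}$-module $F'$. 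By Proposition~\autoref{eaprojchar} again, applied now over the base $k_{i^{-1}[\pa]}$, this means $(P_i)_{\pa}$ is $\eal$-projective.

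For (2), $\ea$-exactness of $P_{\star}$ means each boundary map $b_i$ factors as $b_i = b_i \circ \psi_i \circ b_i$ for some $k$-module map $\psi_i$ (property~\eqref{reladmissibility}). Localizing this identity at $\pa$ and noting that $(\psi_i)_{\pa}$ is a $k_{i^{-1}[\pa]}$-module map (since localization of a $k$-linear map is $k_{i^{-1}[\pa]}$-linear) gives $(b_i)_{\pa} = (b_i)_{\pa} \circ (\psi_i)_{\pa} \circ (b_i)_{\pa}$, which is exactly $\eal$-admissibility of the localized complex. Since localization is exact, $(P_{\star})_{\pa}$ is still a resolution of $M_{\pa}$, and it now has all the properties of an $\eal$-projective resolution of length $d$; hence $pd_{\eal}(M_{\pa}) \leq d = pd_{\ea}(M)$.

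**Main obstacle.** The technical heart is the base-change compatibility in step (1): carefully verifying that $(A \otimes_k F)_{\pa} \cong A_{\pa} \otimes_{k_{i^{-1}[\pa]}} F'$ with $F'$ free over $k_{i^{-1}[\pa]}$, so that the $\ea$-free summand structure actually descends to an $\eal$-free summand structure over the correct relative base ring. One must be attentive to the fact that the relative framework's base ring changes from $k$ to $k_{i^{-1}[\pa]}$ under localization — the class $\eal = \mathscr{E}_{A_{\pa}}^{k_{i^{-1}[\pa]}}$ is defined relative to $k_{i^{-1}[\pa]}$, not $k$ — and to chase the splittings through this change of base correctly. Everything else (exactness of localization, behavior on direct sums, $k$-linearity passing to $k_{i^{-1}[\pa]}$-linearity) is routine.
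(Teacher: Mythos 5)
Your proposal is correct and follows essentially the same route as the paper's proof: localize the $\ea$-projective resolution termwise, verify that the localized differentials remain $\kiki$-split by localizing the splitting maps, and verify that each localized term is $\eal$-projective via the free-direct-summand characterization of proposition $\autoref{eaprojchar}$ together with the base change $(A\otimes_k F)_{\pa}\cong A_{\pa}\otimes_{\kiki}(\kiki\otimes_k F)$. No substantive difference to report.
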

\begin{proof}
	Since $\ma$ is a prime ideal in $A$, $i^{-1}[\ma]$ is a maximal ideal in $\kiki$, whence the localized ring $\kiki$ is a well-defined sub-ring of $A_{\ma}$.  
	Let
	\begin{equation}
	\begin{tikzpicture}[>=angle 90]
	\matrix(a)[matrix of math nodes,
	row sep=3em, column sep=2.5em,
	text height=1.5ex, text depth=0.25ex]
	{...\overset{d_{n+1}}{\rightarrow} P_n & ... & P_1 & P_0 & M & 0 \\};
	\path[->,font=\scriptsize]
	(a-1-1) edge node[above]{$ d_{n} $} (a-1-2);
	\path[->,font=\scriptsize]
	(a-1-2) edge node[above]{$ d_2 $} (a-1-3);
	\path[->,font=\scriptsize]
	(a-1-3) edge node[above]{$ d_1 $} (a-1-4);
	\path[->,font=\scriptsize]
	(a-1-4) edge node[above]{$ d_0 $} (a-1-5);
	\path[->,font=\scriptsize]
	(a-1-5) edge node[above]{$  $} (a-1-6);
	\end{tikzpicture} 
	\label{deltoexseq11}
	\end{equation} be an $\ea$-projective resolution of an $A$-module $M$. 
	The exactness of localization \cite{eisenbud1995commutative} implies: 
	\begin{equation}
	\begin{tikzpicture}[>=angle 90]
	\matrix(a)[matrix of math nodes,
	row sep=3em, column sep=3em,
	text height=1.5ex, text depth=0.25ex]
	{...\overset{d_{n+1}}{\rightarrow} P_n \otimes_A A_{\ma} & ... & P_1 \otimes_A A_{\ma} & P_0 \otimes_A A_{\ma} & M \otimes_A A_{\ma} \rightarrow 0 \\};
	\path[->,font=\scriptsize]
	(a-1-1) edge node[above]{$ d_{n} \otimes_A A_{\ma} $} (a-1-2);
	\path[->,font=\scriptsize]
	(a-1-2) edge node[above]{$ d_2 \otimes_A A_{\ma} $} (a-1-3);
	\path[->,font=\scriptsize]
	(a-1-3) edge node[above]{$ d_1 \otimes_A A_{\ma} $} (a-1-4);
	\path[->,font=\scriptsize]
	(a-1-4) edge node[above]{$ d_0 \otimes_A A_{\ma} $} (a-1-5);
	\end{tikzpicture} 
	\label{deltoexseq12}
	\end{equation} is exact.  
	It will now be verified that~\eqref{deltoexseq12} is a $\eal$-projective resolution of the $A_{\ma}$-module $M_{\ma}$.  
	\begin{description}
		\item[The $d_{n} \otimes_A A_{\ma}$ are $\kiki$-split] \hfill\\
		Since~\eqref{deltoexseq11} was $k$-split then for every $i \in \mathbb{N}$ there existed a $k$-module homomorphism $s_i: P_{n-1} \rightarrow P_n$ (where for convenience write $P_{-1}:=M$) satisfying $d_i = d_i \circ s_i \circ d_i$.  
		Since $A_{\ma}$ is a $\kiki$-algebra $A_{\ma}$ may be viewed as a $\kiki$-module therefore the maps: $s_i \otimes_A 1_{A_{\ma}}$ are $\kiki$-module homomorphisms; moreover they must satisfy: \begin{equation}
		d_i \otimes_A 1_{A_{\ma}}= d_i \otimes_A 1_{A_{\ma}}\circ s_i \otimes_A 1_{A_{\ma}}\circ d_i \otimes_A 1_{A_{\ma}}.  
		\end{equation}
		Therefore~\eqref{deltoexseq12} is $\kiki$-split-exact.  
		\item[The $P_i \otimes_A A_{\ma}$ are $\eal$-projective] \hfill\\
		For each $i \in \mathbb{N}$ if $P_i$ is $\ea$-projective therefore proposition~\ref{eaprojchar} implies there exists some $A$-module $Q$ and some $k$-module $X$ satisfying: \begin{equation}
		P_i \oplus Q \cong A \otimes_k X.
		\label{IXCPNIKA}
		\end{equation}
		Therefore: 
		\small
		\begin{equation*}
		(P_i \otimes_A A_{\ma}) \oplus (Q \otimes_A A_{\ma}) 
		\cong 
		(P_i \otimes_A Q) \otimes_A A_{\ma}
		\cong
		(A \otimes_k X) \otimes_A A_{\ma}
		\end{equation*}
		\begin{equation}
		\cong
		(A \otimes_k X) \otimes_A (A_{\ma} \otimes_{\kiki} \kiki)
		\label{eqfinastiff1}
		\end{equation}
		\normalsize
		Since $A,k$ and $\kiki$ are commutative rings the tensor products $-\otimes_A -$, $-\otimes_k -$ and $-\otimes_{\kiki} -$ are symmetric \cite{rotman1979introduction}, hence~\eqref{eqfinastiff1} implies: 
		\small
		\begin{equation*}
		(P_i \otimes_A A_{\ma}) \oplus (Q \otimes_A A_{\ma}) 
		\cong
		(A \otimes_k X) \otimes_A (A_{\ma} \otimes_{\kiki} \kiki)
		\end{equation*}
		\begin{equation}
		\cong (A_{\ma} \otimes_A A) \otimes_{\kiki} (\kiki \otimes_k X)
		\label{feoforir98g5j8g54g}
		\end{equation}
		Since $A$ is a subring of $A_{\ma}$ then~\eqref{feoforir98g5j8g54g} implies:
		\begin{equation}
		(P_i \otimes_A A_{\ma}) \oplus (Q \otimes_A A_{\ma}) 
		\cong
		A_{\ma} \otimes_{\kiki} (\kiki \otimes_k X).  
		\label{eqfinastiff2}
		\end{equation}
		\normalsize
		$(\kiki \otimes_k X)$ may be viewed as a $\kiki$-module with action $\hat{\cdot}$ defined as: \begin{equation}
		(\forall c \in k)(\forall (c'\otimes_k x) \in \kiki \otimes_k X) \mbox{ } c\hat{\cdot} (c'\otimes_k x) := c\cdot c' \otimes x.  
		\end{equation}
		Since $(\kiki \otimes_k X)$ is a $\kiki$-module then for each $i \in \mathbb{N}$ $(P_i \otimes_A A_{\ma})$ is a direct summand of an $A_{\ma}$-module of the form $A_{\ma} \otimes_{\kiki} X'$ where $X'$ is a $\kiki$-module, 
		thus proposition~\ref{eaprojchar} implies that $P_i \otimes_A A_{\ma}$ is $A_{\ma}$-projective.  
	\end{description}
	Hence~\eqref{deltoexseq12} is an $\eal$-projective resolution of $M \otimes_A A_{\ma} \cong M_{\ma}$; whence: \begin{equation}
	pd_{\eal}(M_{\ma}) \leq pd_{\ea}(M).  
	\end{equation}
\end{proof}
All the homological dimensions discussed to date are related as follows:
\begin{prop}\label{propprojtoGlb}
	If $A$ is a commutative $k$-algebra and $\ma$ be a non-zero maximal ideal in $A$ such that $\ama$ has finite $\kiki$-flat dimension and $D(\kiki )$ is finite then there is a string of inequalities:  
	\small
	\begin{equation*}
	fd_{A_{\ma}}(M_{\ma}) -D(\kiki) 
	-fd_k(A) 
	\leq pd_{A_{\ma}}(M_{\ma}) -D(\kiki)
	-fd_k(A)  \leq pd_{\eal}(M_{\ma}) \leq pd_{\ea}(M) \leq D_{\ek}(A)
	\end{equation*}
	\normalsize
\end{prop}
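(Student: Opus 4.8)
The plan is to realise the displayed string as a concatenation of four inequalities, each of which is either a definition or one of the results already in hand, all applied to the commutative ring $\ama$ regarded as a $\kiki$-algebra and to the $\ama$-module $M_\pa$. The only preliminary is to note that $\ama$ really is a $\kiki$-algebra: since $\pa$ is prime, $i^{-1}[\pa]$ is prime in $k$, and the composite $k\to A\to \ama$ sends $k\setminus i^{-1}[\pa]$ into the units of $\ama$, hence factors through $\kiki$, exactly as in the opening lines of the proof of Lemma $\autoref{localbound}$. Consequently $D(\kiki)$, $\fdp$, $pd_{A_\pa}(M_\pa)$ and $pd_{\eal}(M_\pa)$ are all defined, and by hypothesis $D(\kiki)$ and $\fdp$ are finite.

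Then I would assemble the chain from left to right. The leftmost inequality is Lemma $\autoref{lem:fdpd}$ applied to the ring $\ama$, namely $fd_{A_\pa}(M_\pa)\le pd_{A_\pa}(M_\pa)$, after subtracting the finite quantity $D(\kiki)$ from both sides. The middle inequality is Theorem $\autoref{thrmHochschild1958}$ applied with base ring $\kiki$, $k$-algebra $\ama$ and module $M_\pa$: its hypotheses are that $\kiki$ has finite global dimension (i.e. $D(\kiki)<\infty$) and that $\ama$ has finite flat dimension over $\kiki$ (i.e. $\fdp<\infty$), both assumed, and its conclusion is $pd_{A_\pa}(M_\pa)-D(\kiki)-\fdp\le pd_{\eal}(M_\pa)$. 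The third inequality $pd_{\eal}(M_\pa)\le pd_{\ea}(M)$ is precisely Lemma $\autoref{localbound}$. The rightmost inequality $pd_{\ea}(M)\le D_{\ek}(A)$ is immediate from the definition of the $\ek$-global dimension as the supremum of $pd_{\ea}(N)$ over all $A$-modules $N$. Concatenating the four yields the asserted string (carrying the term $\fdp$, for which the macro was introduced).

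There is essentially no obstacle beyond bookkeeping: the subtractions of $D(\kiki)$ and $\fdp$ must be carried out in the extended natural numbers, which is legitimate precisely because both are finite, so the inequalities are preserved; and if $pd_{A_\pa}(M_\pa)=\infty$ (equivalently, if $pd_{\ea}(M)=\infty$) the whole chain collapses to trivial inequalities. The substantive content has already been discharged in Lemma $\autoref{lem:fdpd}$, Lemma $\autoref{localbound}$ and Theorem $\autoref{thrmHochschild1958}$; the present proposition is just their composite, with the caveat that its displayed form implicitly incorporates the $\fdp$ correction term on the left of Theorem $\autoref{thrmHochschild1958}$.
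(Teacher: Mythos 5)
Your proof is correct and follows essentially the same route as the paper's: the same four ingredients (Lemma $\autoref{lem:fdpd}$ applied over $A_{\pa}$, Theorem $\autoref{thrmHochschild1958}$ with base ring $\kiki$, Lemma $\autoref{localbound}$, and the definition of $D_{\ek}(A)$) concatenated in the same order. Your remark that the chain genuinely carries the $-\fdp$ correction term is also consistent with the paper's own proof, which derives $pd_{A_{\pa}}(M_{\pa})-D(\kiki)-\fdp\leq pd_{\eal}(M_{\pa})$ even though the displayed statement omits $\fdp$.
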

\begin{proof} \hfill 
	\begin{enumerate}
		\item By definition: $pd_{{\mathscr{E}_A^k}}(M) \leq D_{\ek}(A)$.
		\item By lemma~\ref{localbound}: $pd_{\eal}(M_{\ma}) \leq pd_{\ea}(M)$
		\item Since $A_{\ma}$ is flat as a $\kiki$-module and $D(\kiki )$ is finite lemma~\ref{thrmHochschild1958} entails: \\
		$pd_{A_{\ma}}(M_{\ma})-D(\kiki) -fd_k(A) \leq pd_{\eal}(M_{\ma})$
		\item Lemma~\ref{lem:fdpd} implies: 
		\begin{equation}
		fd_{A_{\ma}}(M_{\ma}) \leq pd_{A_{\ma}}(M_{\ma}).  
		\label{fer4f444}
		\end{equation}
		Since the global dimension of $\kiki$ was assumed to be finite~\eqref{fer4f444} implies: 
		\begin{equation}
		fd_{A_{\ma}}(M_{\ma}) -D(\kiki) \leq pd_{A_{\ma}}(M_{\ma}) - D(\kiki).  
		\end{equation}
	\end{enumerate}
\end{proof}

\begin{lem} $\label{lem:pre-weakWeibeliso}$
	If $A$ is a commutative $k$-algebra and $M$ and $N$ be $A$-modules, then there are natural isomorphisms: \begin{equation}
	Ext_{\ea}^n(M,N) \cong HH^n(A,Hom_k(M,N)) \cong Ext_{\rh}^n(A,Hom_k(M,N)).  
	\end{equation}
\end{lem}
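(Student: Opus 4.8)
The plan is to realize both cohomology groups as the cohomology of $Hom_{A^e}(\cbr,-)$ applied to the $(A,A)$-bimodule $Hom_k(M,N)$, once through a tensor--hom adjunction and once directly. The second isomorphism is immediate: by Example \autoref{exendex} the $k$-module $Hom_k(M,N)$ is an $A^e$-module, so Proposition \autoref{hochrelderived} applied to this $A^e$-module already gives the natural isomorphism $HH^n(A,Hom_k(M,N))\cong Ext_{\rh}^n(A,Hom_k(M,N))$. Hence only the first isomorphism requires work.

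First I would establish, for every $A^e$-module $P$ and all $A$-modules $M,N$, a natural isomorphism
\begin{equation}
\Theta\colon Hom_{A^e}(P,Hom_k(M,N)) \overset{\cong}{\longrightarrow} Hom_A(P\otimes_A M,N),
\end{equation}
where $Hom_k(M,N)$ carries the $(A,A)$-bimodule structure of Example \autoref{exendex}, and where $P\otimes_A M$ is formed using the right $A$-action on $P$ and equipped with the left $A$-action inherited from the left $A$-action on $P$. The map $\Theta$ sends $\phi$ to the map $p\otimes m\mapsto\phi(p)(m)$, with inverse sending $\psi$ to $p\mapsto(m\mapsto\psi(p\otimes m))$. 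Verifying that these are well defined on $P\otimes_A M$, mutually inverse, $A$-linear, and natural in all three variables is a routine check of left/right action bookkeeping, which I would carry out explicitly.

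Next I would show that $\cbr\otimes_A M$, with $M$ regarded as a left $A$-module, is an $\ea$-projective resolution of $M$. Since $CB_n(A)=\antw$ has its right $A$-action on the last tensor factor, one gets $CB_n(A)\otimes_A M\cong A^{\otimes n+1}\otimes_k M\cong A\otimes_k(\an\otimes_k M)$ as left $A$-modules, which is $\ea$-free, hence $\ea$-projective by Proposition \autoref{eaprojchar}. For exactness and the required $k$-splitting I would observe that the $k$-linear contracting homotopy of the augmented bar complex $\abr$ (Example \autoref{exadmis}), namely the maps $s_n$ inserting a $1$ in front of a tensor, leave the last tensor factor untouched and so are homomorphisms of right $A$-modules; therefore $s_n\otimes_A 1_M$ is well defined and $k$-linear, and tensoring the homotopy identity of the augmented bar complex over $A$ with $M$ exhibits $\abr\otimes_A M$, which augments to $A\otimes_A M\cong M$, as a $k$-contractible complex. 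Thus $\cbr\otimes_A M$ is an $\ea$-projective resolution of the $A$-module $M$.

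Finally, $Ext_{\ea}^n(M,N)=H^n(Hom_A(\cbr\otimes_A M,N))$ by definition together with Theorem \autoref{thmcomparisonrelcohomolg}. Applying $\Theta$ degreewise with $P=CB_\star(A)$ gives an isomorphism of cochain complexes $Hom_A(\cbr\otimes_A M,N)\cong Hom_{A^e}(\cbr,Hom_k(M,N))$, because under $\Theta$ precomposition with $b'_\star\otimes_A 1_M$ corresponds to precomposition with $b'_\star$; passing to cohomology and using that $\cbr$ is an $\rh$-projective resolution of $A$ (Example \autoref{leembares1}) yields $Ext_{\ea}^n(M,N)\cong HH^n(A,Hom_k(M,N))$, natural in $M$ and $N$. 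The one delicate point is checking that $\Theta$ respects every module structure in play and intertwines the two coboundary maps; everything else is either bookkeeping or a direct appeal to a result established above.
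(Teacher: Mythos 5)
Your proposal is correct and follows essentially the same route as the paper: apply the tensor--hom adjunction $Hom_{A^e}(P,Hom_k(M,N))\cong Hom_A(P\otimes_A M,N)$ degreewise to the bar resolution, use that $CB_{\star}(A)\otimes_A M$ is an $\ea$-projective resolution of $M$, and invoke Proposition \autoref{hochrelderived} for the second isomorphism. The only difference is that you verify the adjunction and the resolution claim explicitly, whereas the paper cites Rotman and Weibel for those two steps.
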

\begin{proof} \hfill \\
	\begin{itemize}
		\item
		For any $(A,A)$-bimodule $X$, $X\otimes_A M$ is an $(A,A)$-bimodule \cite{rotman1979introduction}[Cor. 2.53].  
		
		\item Moreover there are natural isomorphisms: \begin{equation}
		Hom_{_AMod}(X \otimes_A M, N) \overset{\cong}{\rightarrow} Hom_{_AMod_A}(X, Hom_{_kMod}(M, N)) \mbox{ \cite{rotman1979introduction}[Thrm. 2.75]}.  
		\label{remgfierjg9843980tj4444}
		\end{equation}
		In particular~\eqref{remgfierjg9843980tj4444} implies that for every $n$ in $\mathbb{N}$ there is an isomorphism which is natural in the first input: \begin{equation}
		Hom_{_AMod}(A^{\otimes n} \otimes_A M, N) \overset{\psi_n}{\rightarrow} Hom_{_AMod_A}(A^{\otimes n}, Hom_{_kMod}(M, N)).  
		\end{equation}
		Whence if $b'_{n+1}:A^{\otimes n+3} \rightarrow A^{\otimes n+2}$ is the $n^{th}$ map in the Bar complex (recall example ~\ref{leembares1}) and for legibility denote $Hom_{_AMod_A}(b'_n,Hom_k(M,N))$ by $\beta_n$.  The naturality of the maps $\psi_n$ imply the following diagram of $k$-modules commutes: \begin{equation}
		\begin{tikzpicture}[>=angle 90]
		\matrix(a)[matrix of math nodes,
		row sep=2.5em, column sep=4em,
		text height=1.5ex, text depth=0.25ex]
		{ Hom_{_AMod}(A^{\otimes n+2} \otimes_A M, N) & Hom_{_AMod_A}(A^{\otimes n+2}, Hom_{_kMod}(M, N)) \\
			Hom_{_AMod}(A^{\otimes n+3} \otimes_A M, N) & Hom_{_AMod_A}(A^{\otimes n+3}, Hom_{_kMod}(M, N)) \\};
		\path[=,font=\scriptsize]
		(a-1-1) edge [double] node[above]{$ \psi_n $} (a-1-2);
		\path[=,font=\scriptsize]
		(a-2-1) edge [double] node[above]{$ \psi_{n+1} $} (a-2-2);
		\path[->,font=\scriptsize]
		(a-1-1) edge node[left]{$ \psi_{n+1}^{-1}\circ \beta_n \circ \psi_n $}(a-2-1);
		\path[->,font=\scriptsize]
		(a-1-2) edge node[right]{$ \beta_n $}(a-2-2);
		\end{tikzpicture}
		\label{comdiagmnatuICXPNIKA}.  
		\end{equation}
		\item Therefore for every $n$ in $\mathbb{N}$: \begin{equation*}
		(\psi_{n+2}^{-1}\circ \beta_{n+1} \circ \psi_{n+1}) \circ (\psi_{n+1}^{-1}\circ \beta_n \circ \psi_n ) 
		\end{equation*}
		\begin{equation}
		= \beta_{n+1} \circ \beta_n =0.  
		\end{equation}
		Whence $<Hom_{_AMod}(A^{\otimes \star+2} \otimes_A M, N), (\psi_{\star+1}^{-1}\circ \beta_{\star} \circ \psi_{\star} )>$ is a chain complex.  Moreover the commutativity of~\eqref{comdiagmnatuICXPNIKA} implies: \begin{equation*}
		(\forall n\in \mathbb{N}) \mbox{ } H^{n}(Hom_{_AMod}(A^{\otimes \star+2} \otimes_A M, N)) = Ker(\psi_{\star+1}^{-1}\circ \beta_{\star} \circ \psi_{\star} )/Im(\psi_{n+2}^{-1}\circ \beta_{n+1} \circ \psi_{n+1})
		\end{equation*}
		\begin{equation*}
		\cong Ker(\beta_n)/Im(\beta_{n+1}) = H^{n}(Hom_{_AMod_A}(A^{\otimes \star+2}, Hom_{_kMod}(M, N))).  
		\label{ef43f43cf4vc44vc44422221ICXPNIKA}
		\end{equation*}
		\begin{equation}
		= HH^{n}(A,Hom_k(M,N))
		\end{equation}
		Furthermore proposition~\ref{hochrelderived} implies there are natural isomorphisms: \begin{equation}
		HH^{n}(A,Hom_k(M,N)) \cong Ext_{\rh}^{n}(A,Hom_k(M,N));
		\end{equation}
		Whence for all $n$ in $\mathbb{N}$ there are natural isomorphisms: 
		\footnotesize
		\begin{equation}
		H^{n}(Hom_{_AMod}(A^{\otimes \star+2} \otimes_A M, N)) \cong HH^{n}(A,Hom_k(M,N)) \cong Ext_{\rh}^{n}(A,Hom_k(M,N)).  
		\end{equation}
		\normalsize
		\item Finally if $M$ is an $A$-module then $<Hom_{_AMod}(A^{\otimes \star+2} \otimes_A M, N), (\psi_{\star+1}^{-1}\circ \beta_{\star} \circ \psi_{\star} )>$ calculates the $\ea$-relative Ext groups of $M$ with coefficients in $N$; therefore there are natural isomorphisms: \begin{equation}
		H^{n}(Hom_{_AMod}(A^{\otimes \star+2} \otimes_A M, N)) \cong Ext_{\ea}^n(M,N) \mbox{ \cite{weibel1995introduction}[pg. 289]}.  
		\end{equation}
		\item Putting it all together, for every $n$ in $\mathbb{N}$ there are natural isomorphisms: \begin{equation}
		Ext_{\rh}^{n}(A,Hom_k(M,N)) \cong HH^{n}(A,Hom_k(M,N)) \cong Ext_{\rh}^{n}(A,Hom_k(M,N)).  
		\end{equation}
	\end{itemize}
\end{proof}
We may now prove theorem~\ref{theorem1A}.  
\begin{proof}[{Proof of Theorem~\ref{theorem1A}}]
	\begin{enumerate}
		\item For any $A$-modules $M$ and $N$ lemma~\ref{lem:pre-weakWeibeliso} implied:
		\begin{equation}
		Ext_{\ea}^{\star}(N,M) \cong HH^{\star}(A,Hom_k(N,M))
		\label{gfghr4eu9gh948h48gjg}.
		\end{equation}  Therefore taking supremums over all the $A$-modules $M,N$, of the integers $n$ for which~\eqref{gfghr4eu9gh948h48gjg} is non-trivial implies:
		\small\begin{equation}
		D_{{\mathscr{E}^k}}(A)= \underset{M,N\in _AMod}{sup} (sup(\{n \in \mathbb{N}^{\#} | Ext^n(M,N)\neq 0 \})) \end{equation}
		\begin{equation}
		= \underset{M,N\in _AMod}{sup} (sup(\{n \in \mathbb{N}^{\#} | HH^{n}(A,Hom_k(N,M))\neq 0 \}))
		\label{dgfohrg9hg943hg0934jg}.  
		\end{equation}\normalsize  $Hom_k(N,M)$ is only a particular case of an $A^e$-module; therefore taking supremums over \textit{all} $A$-modules bounds~\eqref{dgfohrg9hg943hg0934jg} above as follows:
		\small
		\begin{equation}
		D_{{\mathscr{E}^k}}(A)
		= \underset{M,N\in _AMod}{sup} (sup(\{n \in \mathbb{N}^{\#}| HH^{\star}(A,Hom_k(N,M))\neq 0 \})) \end{equation}
		\begin{equation}
		\leq 
		\underset{\tilde{M}\in _{A^e}Mod}{sup} (sup(\{n \in \mathbb{N}^{\#} | HH^{n}(A,\tilde{M})\neq 0 \})). \label{gfdgg54by65yb6e5ybe65}
		\end{equation}\normalsize  
		The right hand side of~\eqref{gfdgg54by65yb6e5ybe65} is precisely the definition of the Hochschild cohomological dimension.  Therefore \begin{equation}
		D_{\ek}(A)\leq HCdim(A|k)
		\label{eqeee1i1ii1i1i1i1ij2iu32u9h439iuhvu93i344445}
		\end{equation}
		
		Proposition~\ref{propprojtoGlb} applied to~\eqref{eqeee1i1ii1i1i1i1ij2iu32u9h439iuhvu93i344445}, which draws out the conclusion.  
		\item 
		\begin{description}
			\item[Case 1: $Krull(A_{\ma})$ is finite] \hfill \\
			Since $A$ is Cohen-Macaulay at $\ma$ there is an $\ama$-regular sequence $x_1,..,x_d$ in $\ma$ of length $d:=Krull(A_{\ma})$ in $A_{\ma}$.  Therefore proposition~\ref{propflatcalc} implies:
			\begin{equation}
			Krull(A_{\ma}) = fd_{A_{\ma}}(A_{\ma}/(x_1,..,x_n)).   
			\label{ewrkwwkkrewgm}
			\end{equation}  Part $1$ of theorem~\ref{theorem1A} applied to~\eqref{ewrkwwkkrewgm} implies: \begin{equation}
			Krull(A_{\ma}) -D(\kiki) -fd_{\ma}(A_{\ma})
			= fd_{A_{\ma}}(A_{\ma}) - D(\kiki)
			-fd_{\ma}(A_{\ma})
			 \leq HCdim(A|k).  
			\end{equation}
			Moreover the \textit{characterization of quasi-freeness} given in corollary~\ref{propQF} implies that $A$ cannot be quasi-free if: \begin{equation}
			2+D(\kiki) -fd_{\ma}(A_{\ma})\leq Krull(A_{\ma}).  
			\end{equation}
			\item[Case 2: $Krull(A_{\ma})$ is infinite] \hfill \\
			For every positive integer $d$ there exists an $\ama$-regular sequence $x_1^d,..,x^d_d$ in $\ma$ of length $d$.  Therefore proposition~\ref{propflatcalc} implies:
			\begin{equation}
			(\forall d \in \mathbb{Z}^{+}) \mbox{ } d=fd_{A_{\ma}}(A_{\ma}/(x_1^d,..,x^d_d)).   
			\label{ewrkwwkkrewg1m}
			\end{equation}  Therefore part one of theorem~\ref{theorem1A} implies: 
			\small
			\begin{equation}
			(\forall d \in \mathbb{Z}^{+}) \mbox{ } d-D(\kiki )
			-fd_{\ma}(A_{\ma})
			=fd_{A_{\ma}}(A_{\ma}/(x_1^d,..,x^d_d)) - D(\kiki )
			-fd_{\ma}(A_{\ma})
			\leq HCdim(A|k).   
			\label{ewrkwwkkrewg2m}
			\end{equation}
			\normalsize
			Since $D(k)$ and $fd_{\ma}(A_{\ma})$ are finite: \begin{equation}
			\infty -D(\kiki ) -fd_{\ma}(A_{\ma})= \infty \leq HCdim(A|k).  
			\label{nonononoooo124r325231}
			\end{equation}
			Since $Krull(A_{\ma})$ is infinite \eqref{nonononoooo124r325231}
			implies: \begin{equation}
			Krull(A_{\ma}) - D(\kiki ) -fd_{\ma}(A_{\ma})= \infty = HCdim(A|k).  
			\label{nonononoooo124r32523}
			\end{equation}
			In this case corollary~\ref{propQF} implies that $A$ is not quasi-free.  
		\end{description}
	\end{enumerate}
\end{proof}
\subsection{Proof of Theorem~\ref{thrm_wkD1Ctada}}
\begin{proof}[{Proof of Theorem~\ref{thrm_wkD1Ctada}}]  For any $A$-modules $M$ and $N$ lemma $\autoref{lem:pre-weakWeibeliso}$ implied:
		\begin{equation}
		Ext_{\ea}^{\star}(N,M) \cong HH^{\star}(A,Hom_k(N,M))
		\label{gfghr4eu9gh948h48gjg}.
		\end{equation}  Therefore taking supremums over all the $A$-modules $M,N$, of the integers $n$ for which $\eqref{gfghr4eu9gh948h48gjg}$ is non-trivial implies:
		\small\begin{equation}
		D_{{\mathscr{E}^k}}(A)= \underset{M,N\in _AMod}{sup} (sup(\{n \in \mathbb{N}^{\#} | Ext^n(M,N)\neq 0 \})) \end{equation}
		\begin{equation}
		= \underset{M,N\in _AMod}{sup} (sup(\{n \in \mathbb{N}^{\#} | HH^{n}(A,Hom_k(N,M))\neq 0 \}))
		\label{dgfohrg9hg943hg0934jg}.  
		\end{equation}\normalsize  $Hom_k(N,M)$ is only a particular case of an $A^e$-module; therefore taking supremums over \textit{all} $A$-modules bounds $\eqref{dgfohrg9hg943hg0934jg}$ above as follows:
		\small
		\begin{equation}
		D_{{\mathscr{E}^k}}(A)
		= \underset{M,N\in _AMod}{sup} (sup(\{n \in \mathbb{N}^{\#}| HH^{\star}(A,Hom_k(N,M))\neq 0 \})) \end{equation}
		\begin{equation}
		\leq 
		\underset{\tilde{M}\in _{A^e}Mod}{sup} (sup(\{n \in \mathbb{N}^{\#} | HH^{n}(A,\tilde{M})\neq 0 \})). \label{gfdgg54by65yb6e5ybe65}
		\end{equation}\normalsize  
		The right hand side of $\eqref{gfdgg54by65yb6e5ybe65}$ is precisely the definition of the Hochschild cohomological dimension.  Therefore \begin{equation}
		D_{\ek}(A)\leq HCdim(A|k)
		\label{eqeee1i1ii1i1i1i1ij2iu32u9h439iuhvu93i344445}
		\end{equation}
		Proposition~\ref{propprojtoGlb} applied to $\eqref{eqeee1i1ii1i1i1i1ij2iu32u9h439iuhvu93i344445}$ then draws out the conclusion.  

Proposition $\autoref{propflatcalc}$ implies that: \begin{equation}
		n=fd_A(A/(x_1,..,x_n)).  
		\label{memememememrfeifiefi4iji493}
		\end{equation}
		Therefore $\eqref{eq1dcerion1}$ applied to the $A$-module $A/(x_1,..,x_n$ together with $\eqref{memememememrfeifiefi4iji493}$ imply: \begin{equation}
		n -D(k) = fd_A(A/(x_1,..,x_n) \leq D_{\mathscr{E}^{k}} \leq HCDim(A/k).  
		\end{equation}
		
		If $\ncof$ is generated by a  regular sequence $x_1,..,x_n$ then proposition $\autoref{propflatcalc}$ implies: \begin{equation}
		n = fd_{A^e}(A\otimes_k A/\ncof)
		\end{equation}
		However by definition of $\ncof$ as the kernel of $\mu_A$: $A \otimes_k A/\ncof \cong A$.  Therefore: \begin{equation}
		n = fd_{A^e}(A).
		\end{equation}
		Lemma $\autoref{lem:fdpd}$ together with lemma~\ref{thrmHochschild1958} imply: \begin{equation}
		n = fd_{A^e}(A) \leq pd_{A^e}(A) \leq pd_{\rh}(A) + D(k).  
		\label{jfvo43ufbv43o8u8948484}
		\end{equation}
		Since $D(k)$ is finite then $\eqref{jfvo43ufbv43o8u8948484}$ entails: \begin{equation}
		n -D(k) \leq pd_{\rh}(A).
		\label{4fb54tb5n5n5}
		\end{equation}
		By theorem $\autoref{thrmcharnsmooth}$ $\eqref{4fb54tb5n5n5}$ is equivalent to: \begin{equation}
		n -D(k) \leq HCDim(A).
		\end{equation}
		
		If $A$ is Cohen-Macaulay at one of its maximal ideals $\ma$ then there exists a maximal regular $x_1,..,x_d$ in $A_{\ma}$ with $d=Krull(A_{\ma})$.   Therefore $\eqref{eq2dcerion2}$ implies: \begin{equation}
		Krull(A_{\ma}) -D(k) = d -D(k) \leq D(A_{\ma}) - D(k).
		\label{fefefefefefe333}
		\end{equation}
		Since $D(A_{\ma})\leq D(A)$, then
		\begin{equation}
		Krull(A_{\ma}) -D(k) \leq D(A_{\ma}) - D(k) \leq D(A) - D(k). 
		\label{fg4f4v43v44v4b}
		\end{equation}
		Finally $\eqref{eq1dcerion1}$ applied to $\eqref{fg4f4v43v44v4b}$ implies: \begin{equation}
		Krull(A_{\ma}) -D(k) \leq D(A) - D(k) \leq HCDim(A). 
		\end{equation}
\end{proof}
Next, we summarize the contributions made within this paper.  
\section{Conclusion}
In this paper, we extended a result of \cite{cuntz1995algebra} which showed that most commutative affine $k$-algebras fail to be smooth in the non-commutative sense, as formalized by quasi-freeness.   This was possible by using Theorem~\ref{thrm_wkD1Ctada} which established a concrete lower-bound on the Hochschild cohomological dimension of a commutative $k$-algebra in terms of a small number of classical dimension-theoretic invariants.  In particular, a simple computation only involving the Krull dimension of $A$, the flat-dimension of $k$  at one point, and the global dimension of the base ring, can be used to determine if a non-commutative space's associated $k$-algebra is quasi-free or not.  

The author would like to thank ETH Z\"{u}rich foundation for its support as well as the Universit\'{e} de Montr\'{e}al's Mathematics department.  

%
%
%

	\renewcommand{\thepage}{}
\bibliographystyle{abbrvnat}
\bibliography{References}
\begin{appendices}
	This appendix contains certain technical lemmas or auxiliary results that otherwise detracted from the overall flow of the paper.  
\section{Some Technical Results}
\begin{prop}[Dimension Shifting]\label{propext11}	
	If
	\begin{equation}
	\begin{tikzpicture}[>=angle 90]
	\matrix(a)[matrix of math nodes,
	row sep=3em, column sep=2.5em,
	text height=1.5ex, text depth=0.25ex]
	{...\overset{d_{n+1}}{\rightarrow} P_n & ... & P_1 & P_0 & 0 \\};
	\path[->,font=\scriptsize]
	(a-1-1) edge node[above]{$ d_{n} $} (a-1-2);
	\path[->,font=\scriptsize]
	(a-1-2) edge node[above]{$ d_2 $} (a-1-3);
	\path[->,font=\scriptsize]
	(a-1-3) edge node[above]{$ d_1 $} (a-1-4);
	\path[->,font=\scriptsize]
	(a-1-4) edge node[above]{$  $} (a-1-5);
	\end{tikzpicture} 
	\label{f4f4f4f4tseq1}
	\end{equation}
	is a deleted $\ea$-projective resolution of an $A$-module $M$ then for every $A$-module $N$ and for every positive integer $n$ there are isomorphisms natural in $N$: \begin{equation}
	Ext^1_{\ea}(Ker(d_n),N)\cong Ext^{n+1}_{\ea}(A,N)
	\end{equation}
\end{prop}
\begin{proof}
	By definition the truncated sequence is exact: \begin{equation}
	\begin{tikzpicture}[>=angle 90]
	\matrix(a)[matrix of math nodes,
	row sep=3em, column sep=2.5em,
	text height=1.5ex, text depth=0.25ex]
	{...\overset{d_{n+j}}{\rightarrow} P_{n+j} & ... & P_{n+1} & Ker(d_n) & 0 \\};
	\path[->,font=\scriptsize]
	(a-1-1) edge node[above]{$ d_{n+j-1} $} (a-1-2);
	\path[->,font=\scriptsize]
	(a-1-2) edge node[above]{$ d_{n+1} $} (a-1-3);
	\path[->,font=\scriptsize]
	(a-1-3) edge node[above]{$ \eta $} (a-1-4);
	\path[->,font=\scriptsize]
	(a-1-4) edge node[above]{$  $} (a-1-5);
	\end{tikzpicture}
	\label{f4f4f4f4tseq2},
	\end{equation}
	where $\eta$ is the canonical map satisfying $d_n=ker(d_n)\circ \eta$ (arising from the universal property of $ker(d_n)$).  Moreover since~\eqref{f4f4f4f4tseq1} is $\ea$-exact, $d_n$ is $k$-split; whence $\eta$ must be $k$-split.  Moreover for every $j\geq n+1$, $d_j$ was by assumption $k$-split therefore~\eqref{f4f4f4f4tseq2} is $\ea$-exact and since for every natural number $m>n$ $P_m$ is by hypothesis $\ea$-projective then~\eqref{f4f4f4f4tseq2} is an augmented $\ea$-projective resolution of the $A$-module $Ker(d_n)$.  
	
	For every natural number $m$, relabel: \begin{equation}
	Q_m:=P_{m+n} \mbox{ and } p_{m}:=d_{n+m}.  
	\end{equation}
	By theorem~\ref{thmcomparisonrelcohomolg}: \begin{equation}
	(\forall N \in _AMod)(\forall m \in \mathbb{N}) \mbox{ } 
	Ext_{\ea}^m(Ker(d_n),N)
	\cong
	H^{m}(Hom_A(Q_{\star},N))
	\end{equation}
	\begin{equation}
	= Ker(Hom_A(p_n,N))/Im(Hom_A(p_{n+1},N))
	\end{equation}
	\begin{equation}
	=
	Ker(Hom_A(d_{n+m},N))/Im(Hom_A(d_{n+m+1},N))
	\end{equation}
	\begin{equation}
	=H^{m+n}(Hom_A(P_{\star},N)) 
	\end{equation}
	\begin{equation}
	\cong
	Ext_{\ea}^m(A,N).  
	\end{equation}
\end{proof}
\section{Auxiliary Results}
\begin{proof}[Proof of Proposition~\ref{lemqfromfqfandprojrel1}]
	Let \begin{equation}
	0 \rightarrow M \rightarrow B \overset{\pi}{\rightarrow} T_A(P) \rightarrow 0
	\label{vrvr45r4v4e45v5v5}
	\end{equation}
	be a $k$-Hochschild extension of $T_A(P)$ by $M$.  
	We use the universal property of $T_A(P)$ to show that there must exist a lift $l$ of~\eqref{vrvr45r4v4e45v5v5}.  
	
	Let $p: T_A(P) \rightarrow A$ be the projection $k$-algebra homomorphism of  $T_A(P)$ onto $A$.  $p$ is $k$-split since the $k$-module inclusion $i: A \rightarrow T_A(P)$ is a section of $p$; therefore $p$ is an $\rh$-epimorphism and \begin{equation}
	0 \rightarrow Ker(p \circ \pi ) \rightarrow B \rightarrow A \rightarrow 0
	\end{equation}
	is a $k$-Hochschild extension of $A$ by the $(A,A)$-bimodule $Ker(p \circ \pi )$.  Since $A$ is a quasi-free $k$-algebra there exists a $k$-algebra homomorphism $l_1: A \rightarrow B$ lifting $p \circ \pi$.  Hence $B$ inherits the structure of an $(A,A)$-bimodule and $\pi$ may be viewed as an $(A,A)$-bimodule homomorphism.  Moreover $l_1$ induces an $A$-algebra structure on $B$.  
	
	Let $f: P \rightarrow T_A(P)$ be the $(A,A)$-bimodule homomorphism satisfying the universal property of the tensor algebra on the $(A,A)$-bimodule $P$.  Since $\pi: B \rightarrow A$ is an $\rh$-epimorphism 
	and since $P$ is an $\rh$-projective $(A,A)$-bimodule, proposition~\ref{eaprojchar} implies that that there exists an $(A,A)$-bimodule homomorphism $l_2 : P \rightarrow B$ satisfying $\pi \circ l_2 = f$.  
	
	Since $l_2:P \rightarrow B$ is an $(A,A)$-bimodule homomorphism to a $A$-algebra the universal property of the tensor algebra $T_A(P)$ on the $(A,A)$-bimodule $P$, see \cite{bourbaki1998algebra}, implies there is an $A$-algebra homomorphism $l: T_A(P) \rightarrow B$ whose underlying function satisfies: $l \circ f = l_2$.  
	
	Therefore $l \circ \pi \circ l_2 = l_2$; whence $l \circ \pi = 1_{T_A(P)}$; that is $l$ is a $A$-algebra homomorphism which is a section of $\pi$, that is $l$ lifts $\pi$.  
\end{proof}
\end{appendices}
\printindex
\end{document}